\documentclass[12pt]{amsart}
\usepackage{epsfig,color}

\headheight=6.15pt \textheight=8in \textwidth=6.5in
\oddsidemargin=0in \evensidemargin=0in \topmargin=0in

\setcounter{section}{0}
\theoremstyle{definition}

\newtheorem{theorem}{Theorem}[section]
\newtheorem{definition}[theorem]{Definition}
\newtheorem{conjecture}[theorem]{Conjecture}
\newtheorem{proposition}[theorem]{Proposition}
\newtheorem{lemma}[theorem]{Lemma}
\newtheorem{remark}[theorem]{Remark}
\newtheorem{corollary}[theorem]{Corollary}
\newtheorem{example}[theorem]{Example}
\newtheorem*{acknowledgements}{Acknowledgements}
\numberwithin{equation}{section}

\newcommand{\abs}[1]{\lvert#1\rvert}

\begin{document}

\title{Gap and rigidity theorems of  $\lambda$-hypersurfaces}

\author{Qiang Guang}

\address{MIT, Department of Mathematics,
77 Massachusetts Avenue, Cambridge, MA 02139-4307}

\curraddr{}
\email{qguang@math.mit.edu}   
\thanks{}

%\subjclass[2000]{53C44}

%\keywords{$\lambda$-hypersurfaces, $\lambda$-curves, the second fundamental form, Bernstein problem}

\begin{abstract}
We study $\lambda$-hypersurfaces that are critical points of a Gaussian weighted area functional $\int_{\Sigma} e^{-\frac{|x|^2}{4}}dA$ for compact variations that preserve weighted volume. First, we prove various gap and rigidity theorems  for complete $\lambda$-hypersurfaces in terms of the norm of the second fundamental form $|A|$. Second, we show that in one dimension, the only smooth complete and embedded $\lambda$-hypersurfaces in $\mathbb{R}^2$ with $\lambda\geq 0$ are lines and round circles. Moreover, we establish a Bernstein type theorem for $\lambda$-hypersurfaces which states that smooth $\lambda$-hypersurfaces that are entire graphs with polynomial volume growth are hyperplanes. All the results can be viewed as generalizations of results for self-shrinkers.
\end{abstract}

\maketitle

\section{Introduction}
We follow the notation of \cite{CW1} and call a hypersurface ${\Sigma}^{n} \subset \mathbb{R}^{n+1}$ a $\lambda$-hypersurface if it satisfies
\begin{equation}
H-\frac{\langle x,\mathbf{n}\rangle}{2}=\lambda,
\end{equation}
where $\lambda$ is any constant, $H$ is the mean curvature, $\mathbf{n}$ is the outward pointing unit normal and $x$ is the position vector.

$\lambda$-hypersurfaces were first studied by McGonagle and Ross in \cite{MR1}, where they investigate the following isoperimetric type problem in a Gaussian weighted Euclidean space:

Let $\mu(\Sigma)$ be the weighted area functional defined by $\mu(\Sigma)=\int_{\Sigma} e^{-\frac{|x|^2}{4}}dA$ for any hypersurface $\Sigma^n \subset\mathbb{R}^{n+1}$.  Consider the variational problem of minimizing $\mu(\Sigma)$ among all $\Sigma$ enclosing a fixed Gaussian weighted volume. Note that the variational problem is not to consider $\Sigma$ enclosing a specific fixed weighted volume, but to consider variations that preserve the weighted volume. 

It turns out that critical points of this variational problem are $\lambda$-hypersurfaces and the only smooth  stable ones are hyperplanes; see \cite{MR1}.

In \cite{CW1}, Cheng and Wei introduced the notation of $\lambda$-hypersurfaces by studying the weighted volume-preserving mean curvature flow. They proved that $\lambda$-hypersurfaces are critical points of the weighted area functional for the weighted volume-preserving variations. Moreover, they defined a $F$-functional of $\lambda$-hypersurfaces and studied $F$-stability, which extended a result of Colding-Minicozzi \cite{CM1}.

\begin{example}
We give three examples of $\lambda$-hypersurfaces in $\mathbb{R}^3$.
\begin{itemize}
\item[(1)] The sphere $\mathbb{S}^2(r)$ with radius $r=\sqrt{\lambda^2+4}-\lambda$.
\item[(2)] The cylinder $\mathbb{S}^1(r) \times \mathbb{R}$, where $\mathbb{S}^1(r)$ has radius $\sqrt{\lambda^2+2}-\lambda$.
\item[(3)] The hyperplane in $\mathbb{R}^3$.
\end{itemize}
\end{example}

Note that when $\lambda=0$, $\lambda$-hypersurfaces are just self-shrinkers and they can be viewed as a generalization of self-shrinkers in some sense. 

It is well-known that self-shrinkers play a key role in the study of mean curvature flow (``MCF"), since they describe the singularity models of the MCF. In one dimension, smooth complete embedded self-shrinking curves are totally understood and they are just lines and round circles by the work of Abresch and Langer \cite{AbL}. In higher dimensions, self-shrinkers are more complicated and there are only few examples; see \cite{A}, \cite{KM}, \cite{M1} and \cite{N1}. There are some classification and rigidity results of self-shrinkers under certain assumptions. Ecker and Huisken \cite{EH1} proved that if a self-shrinker is an entire graph with polynomial volume growth, then it is a hyperplane. Later, Wang \cite{WL} removed the condition of polynomial volume growth.  In {\cite{CM1}}, Colding and Minicozzi proved that the only smooth complete embedded self-shrinkers with polynomial volume growth and $H\geq 0$ in $\mathbb{R}^{n+1}$ are generalized cylinders $\mathbb{S}^k \times \mathbb{R}^{n-k}$.

\vspace{3mm}
In this paper, we study $\lambda$-hypersurfaces from three aspects: gap and rigidity results, one-dimensional case and entire graphic case. 

The first main result is the following gap theorem for $\lambda$-hypersurfaces in terms of the norm of the second fundamental form $|A|$.

\begin{theorem}\label{1.1}
If $\Sigma^n \subset \mathbb{R}^{n+1}$ is a smooth complete embedded $\lambda$-hypersurface satisfying $H-\frac{\langle x,\mathbf{n}\rangle}{2}=\lambda$ with polynomial volume growth, which satisfies
\begin{equation}\label{1.2}
|A| \leq \frac{\sqrt{\lambda^2+2}-\abs{\lambda}}{2},
\end{equation}
then $\Sigma$ is one of the following:
\begin{itemize}
\item[(1)] a round sphere $\mathbb{S}^n$,
\item[(2)] a cylinder $\mathbb{S}^k \times \mathbb{R}^{n-k}$ for $1\leq k \leq n-1$,
\item[(3)] a hyperplane in $\mathbb{R}^{n+1}$.
\end{itemize}

\end{theorem}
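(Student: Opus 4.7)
The plan is to proceed via a Simons-type identity for the drift Laplacian $\mathcal L := \Delta - \tfrac12\langle x,\nabla\cdot\rangle$, an algebraic inequality tied to the value $a := \tfrac12(\sqrt{\lambda^2+2}-|\lambda|)$, and a weighted cutoff integration by parts; this extends the standard approach for the self-shrinker case $\lambda=0$. First I would derive, by combining Simons' identity $\Delta h_{ij} = \nabla_i\nabla_j H + H(A^2)_{ij} - |A|^2 h_{ij}$ with the derivatives of the structure equation $H = \tfrac12\langle x,\mathbf n\rangle + \lambda$ (which yield $\nabla_k H = \tfrac12 h_{k\ell}\,x^T_\ell$ and then $\nabla_i\nabla_j H = \tfrac12\langle x^T,\nabla h_{ij}\rangle + \tfrac12 h_{ij} - (H-\lambda)(A^2)_{ij}$), the key identity
\[
\mathcal L|A|^2 \;=\; 2|\nabla A|^2 + |A|^2 - 2|A|^4 + 2\lambda\,\mathrm{tr}(A^3).
\]
The $x^T$-drift contributions cancel after rearrangement, and I would double-check the coefficient of $\lambda\,\mathrm{tr}(A^3)$ by verifying that the right-hand side vanishes on the model sphere and cylinders from the introduction, where $r^2+2\lambda r-2k=0$.

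Since every principal curvature satisfies $|\kappa_i|\le|A|$, one has $|\mathrm{tr}(A^3)|\le|A|^3$, and inserting this into the identity yields
\[
\mathcal L|A|^2 \;\ge\; 2|\nabla A|^2 + |A|^2\bigl(1-2|A|^2-2|\lambda|\,|A|\bigr).
\]
A direct computation shows $a$ is precisely the positive root of $2t^2+2|\lambda|t-1=0$, so under the hypothesis $|A|\le a$ the bracket is nonnegative and $\mathcal L|A|^2 \ge 2|\nabla A|^2 \ge 0$ pointwise on $\Sigma$. To turn this into a rigidity statement, I would introduce a cutoff $\eta_R$ equal to $1$ on the extrinsic ball $B_R$, supported in $B_{2R}$, with $|\nabla\eta_R|\le C/R$, and use the self-adjointness of $\mathcal L$ with respect to $e^{-|x|^2/4}\,dA$ to write
\[
\int_\Sigma \eta_R^2\,\mathcal L|A|^2\,e^{-|x|^2/4}\,dA \;=\; -2\int_\Sigma \eta_R\,\langle\nabla\eta_R,\nabla|A|^2\rangle\,e^{-|x|^2/4}\,dA.
\]
Combining the pointwise lower bound with $\bigl|\nabla|A|^2\bigr|\le 2|A|\,|\nabla A|\le 2a\,|\nabla A|$, Cauchy--Schwarz, and an absorption step, I would obtain $\int_\Sigma \eta_R^2\,|\nabla A|^2\,e^{-|x|^2/4}\,dA \le C'\int_\Sigma|\nabla\eta_R|^2\,e^{-|x|^2/4}\,dA$; the polynomial volume growth forces the right-hand side to $0$ as $R\to\infty$, hence $\nabla A\equiv 0$ on $\Sigma$.

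Finally, by Lawson's classical classification, a complete embedded hypersurface in $\mathbb R^{n+1}$ with parallel second fundamental form must be a hyperplane, a round sphere, or a generalized cylinder $\mathbb S^k\times\mathbb R^{n-k}$ for some $1\le k\le n-1$, and the $\lambda$-hypersurface equation then uniquely determines the radius, giving the trichotomy. The main obstacle I anticipate is the weighted integration by parts, where one must combine polynomial volume growth with the Gaussian weight and the boundedness of $|A|$ both to absorb the gradient term and to kill the boundary contribution as $R\to\infty$; the Simons-type identity and Lawson's rigidity theorem are essentially standard, though the former requires careful tracking of the $\lambda$-dependent terms.
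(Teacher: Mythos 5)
Your proposal is correct and follows essentially the same route as the paper: the Simons-type identity for $\mathcal{L}|A|^2$, the observation that $\tfrac12(\sqrt{\lambda^2+2}-|\lambda|)$ is the positive root of $2t^2+2|\lambda|t-1=0$ so the zeroth-order term is nonnegative, a weighted cutoff integration by parts using polynomial volume growth to force $\nabla A\equiv 0$, and Lawson's theorem; the sign you carry on the $\lambda\,\mathrm{tr}(A^3)$ term differs from the paper's only by the orientation convention on $A$ and is harmless since only $|\mathrm{tr}(A^3)|\le|A|^3$ is used. The only cosmetic difference is that the paper isolates the integrability of $|\nabla A|^2 e^{-|x|^2/4}$ as a separate proposition before integrating the identity, whereas you run the cutoff and absorption in one pass, which works equally well.
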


\begin{remark}
Note that when $\lambda=0$, then $\Sigma$ is a self-shrinker satisfying $|A|^2\leq 1/2$. So this theorem implies the gap theorem of Cao and Li \cite{CL1} in codimension one case. Cheng, Ogata and Wei \cite{COW1} obtained a gap theorem for $\lambda$-hypersurfaces in terms of $|A|$ and $H$, which also generalized Cao and Li's result.
\end{remark}

Partially motivated by the work of Chern, do Carmo and Kobayashi \cite{CDK} on minimal submanifolds of a sphere with the second fundamental form of constant length, we consider smooth closed embedded $\lambda$-hypersurfaces $\Sigma^2\subset \mathbb{R}^3$ with $\abs{A}=constant$ and $\lambda \geq 0$. We prove that they are just round spheres. It can be thought of as a generalization of the result that any smooth self-shrinker in $\mathbb{R}^3$ with $|A|=constant$ is a generalized cylinder; see \cite{DX1} and \cite{G1}. 

\begin{theorem}\label{1.4}
Let $\Sigma^2 \subset \mathbb{R}^3$ be a smooth closed and embedded $\lambda$-hypersurface with $\lambda \geq 0$. If the second fundamental form of $\Sigma^2$ is of constant length, i.e., $|A|=constant$, then $\Sigma^2$ is a round sphere.
\end{theorem}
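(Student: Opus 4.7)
My plan is to reduce the theorem to showing that the mean curvature $H$ is constant, since once $H$ is constant the $\lambda$-equation forces $\langle x,\mathbf{n}\rangle$ to also be constant and a short direct argument then identifies $\Sigma^2$ with a round sphere centered at the origin.

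The starting point is a set of drift-Laplacian identities for $\lambda$-hypersurfaces. With $\mathcal{L}=\Delta_\Sigma-\tfrac12\langle x,\nabla\cdot\rangle$ the drift Laplacian associated to the Gaussian measure $d\mu=e^{-|x|^2/4}\,dA$, combining the Simons identity with $H-\tfrac12\langle x,\mathbf{n}\rangle=\lambda$ yields
\[
\mathcal{L}H=\left(\tfrac12-|A|^2\right)H+\lambda|A|^2,\qquad \mathcal{L}h_{ij}=\left(\tfrac12-|A|^2\right)h_{ij}+\lambda(A^2)_{ij},
\]
\[
\mathcal{L}|A|^2=2|\nabla A|^2+2|A|^2\left(\tfrac12-|A|^2\right)+2\lambda\operatorname{tr}(A^3).
\]
In dimension two, Cayley--Hamilton gives $(A^2)_{ij}-\tfrac{|A|^2}{2}\delta_{ij}=H\hat h_{ij}$, so the traceless part $\hat h_{ij}=h_{ij}-\tfrac{H}{2}\delta_{ij}$ satisfies $\mathcal{L}\hat h_{ij}=\left(\tfrac12-|A|^2+\lambda H\right)\hat h_{ij}$. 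Since $|\hat A|^2=|A|^2-\tfrac{H^2}{2}$, showing $\Sigma^2$ is umbilical is equivalent to $|\hat A|^2\equiv 0$.

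Set $c=|A|^2$, constant by hypothesis. Integrating $\mathcal{L}H=0$ against $d\mu$ on the closed $\Sigma^2$ yields $(\tfrac12-c)\int H\,d\mu+\lambda c\,\mu(\Sigma)=0$. If $c=\tfrac12$, this forces $\lambda=0$, and $\int H\mathcal{L}H\,d\mu=-\int|\nabla H|^2\,d\mu=0$ gives $H$ constant. If $c<\tfrac12$, setting $H_0=\lambda c/(c-\tfrac12)$ and $u=H-H_0$, one obtains $\mathcal{L}u=-(c-\tfrac12)u$, so integration by parts gives $\int|\nabla u|^2\,d\mu=(c-\tfrac12)\int u^2\,d\mu\leq 0$, forcing $u\equiv 0$ and $H$ constant.

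The crucial and most delicate case is $c>\tfrac12$, which corresponds to the $\lambda$-sphere solutions with $\lambda>0$, and is where the hypothesis $\lambda\geq 0$ is essential. Here the plain eigenvalue argument is not conclusive, and I would combine the pointwise bound $H^2\leq 2c$ (from $|\hat A|^2\geq 0$) with the identity $\mathcal{L}|\hat A|^2=2\lambda(H-\tilde H_0)|\hat A|^2+2|\nabla\hat A|^2$ where $\tilde H_0=(c-\tfrac12)/\lambda$. Since $|\hat A|^2$ satisfies a linear elliptic differential inequality with bounded coefficients, the strong minimum principle implies that either $|\hat A|^2\equiv 0$ (in which case we are done), or $|\hat A|^2>0$ everywhere. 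In the latter case, at the maximum of $|\hat A|^2$ one deduces $\lambda H\leq c-\tfrac12$; combining this with the integral identities $\int H\,d\mu=H_0\mu(\Sigma)$ and $\int x\,d\mu=-2\lambda\int\mathbf{n}\,d\mu$ (the latter coming from the weighted divergence theorem applied to constant ambient vector fields together with the additional eigen-identity $\mathcal{L}\mathbf{n}_i=-c\,\mathbf{n}_i$, which also gives orthogonality of $u$ to the $\mathbf{n}_i$), and crucially $\lambda\geq 0$, I expect a contradiction, forcing umbilicity everywhere. Working out this last step rigorously is the principal technical obstacle. Once $H$ is constant, $\langle x,\mathbf{n}\rangle=2(H-\lambda)$ is constant so $A(x^T)=0$; since $K=(H^2-c)/2$ is then a constant that cannot vanish (no closed flat surface embeds in $\mathbb R^3$) nor be negative (using the point farthest from the origin), $K>0$ and $A$ is invertible, forcing $x^T\equiv 0$. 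Hence $|x|$ is constant and $\Sigma^2$ is the round sphere centered at the origin.
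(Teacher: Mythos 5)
Your reduction strategy (prove $H$ is constant, then conclude) is sound in outline, and your treatment of the cases $|A|^2=\tfrac12$ and $|A|^2<\tfrac12$ via the eigenfunction identity $\mathcal{L}H=(\tfrac12-|A|^2)H+\lambda|A|^2$ and integration against the Gaussian measure is correct. The problem is that these are the empty cases: for a closed $\lambda$-hypersurface with $\lambda>0$ the model solution is the sphere of radius $\sqrt{\lambda^2+4}-\lambda<2$, which has $|A|^2=2/r^2>\tfrac12$. So every actual configuration the theorem must handle falls into your third case $c>\tfrac12$, and there you explicitly stop short: you record the inequality $\lambda H\leq c-\tfrac12$ at a maximum of $|\hat A|^2$, list the integral identities $\int H\,d\mu=H_0\mu(\Sigma)$ and $\int x\,d\mu=-2\lambda\int\mathbf{n}\,d\mu$, and then say you ``expect a contradiction'' without deriving one. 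That is the entire content of the theorem, so as written the proposal is a genuine gap, not a proof. (Two smaller issues: your signs on the $\lambda A^2$ terms are opposite to those forced by the convention $H-\tfrac12\langle x,\mathbf{n}\rangle=\lambda$ with outward normal, which matters for the sign analysis you are attempting in the hard case; and the strong-minimum-principle dichotomy for $|\hat A|^2$ does not obviously help, since $|\hat A|^2>0$ everywhere is exactly what happens away from umbilic points and is not by itself contradictory.)

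For comparison, the paper does not try to prove $H$ is constant directly. It first shows the genus is zero: combining Gauss--Bonnet with the Minkowski integral formulas gives $\int_\Sigma H^2\geq(\lambda^2+1)\,\mathrm{Area}(\Sigma)$, and a pointwise analysis at the minimum of $|x|$ (where $\nabla H=0$, which together with $\nabla|A|^2=0$ forces either umbilicity or $\nabla A=0$ at that point) produces an upper bound for the constant $|A|^2$ that, fed back into Gauss--Bonnet, forces $g=0$. It then invokes the Hartman--Wintner theorem that a closed genus-zero special Weingarten surface (which any surface with $|A|=\mathrm{const}$ is) must be a round sphere. This external rigidity input is what replaces the missing argument in your case $c>\tfrac12$; if you want to complete your route without it, you need a genuinely new idea to rule out nonconstant $H$ when $|A|^2>\tfrac12$, and nothing in your listed identities currently does that.
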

The proof of this theorem has two ingredients. The first ingredient is to consider the point where the norm of the position vector $|x|$ achieves its minimum.  This will give that the genus is $0$. The second ingredient is an interesting result from \cite{HW1} that any smooth closed special $W$-surface of genus $0$ is a round sphere. 

\vspace{3mm}
Next, we turn to the one-dimensional case. Following an argument in \cite{MC1}, we show that just as self-shrinkers in $\mathbb{R}^2$, the only smooth complete and embedded $\lambda$-hypersurfaces ($\lambda$-curves) in $\mathbb{R}^2$  with $\lambda \geq 0$ are lines and round circles.

\begin{theorem}\label{1.7}
Any smooth complete embedded $\lambda$-hypersurface  ($\lambda$-curve) $\gamma$  in $\mathbb{R}^2$  satisfying $H-\frac{\langle x,\mathbf{n}\rangle}{2}=\lambda$ with $\lambda \geq0$  must either be a line or a round circle.
\end{theorem}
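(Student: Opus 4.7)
\medskip
\noindent\textbf{Plan.} My approach rests on a first-order ODE for $H$ along $\gamma$. Differentiating $H=\lambda+\langle x,\mathbf n\rangle/2$ along $\gamma$ (using $\mathbf n'=HT$) gives
\begin{equation*}
\frac{dH}{ds}=\tfrac{1}{2}H\,\langle x,T\rangle.
\end{equation*}
The crucial consequence is that $H$ cannot change sign: if $H$ vanishes at one point, then $H\equiv 0$ on $\gamma$, so either $H\equiv 0$ or $H$ has a constant sign. Setting $G=\langle x,T\rangle$, a direct computation using the $\lambda$-curve equation yields $dG/ds=1-2H(H-\lambda)$, and combining with the $H$-equation gives the conserved quantity
\begin{equation*}
C(s):=G^{2}+4H^{2}-8\lambda H-4\log|H|\equiv\text{const}
\end{equation*}
along $\gamma$ (on any component where $H\neq 0$). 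One checks that $4H^{2}-8\lambda H-4\log|H|$ is coercive as $H\to 0$ or $\pm\infty$, so every level set of $C$ in the $(H,G)$-plane is a bounded closed curve.

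\medskip
\noindent\textbf{Non-compact case.} Suppose $\gamma$ is non-compact. If $H$ had constant nonzero sign on $\gamma$, then $(H(s),G(s))$ would lie on a bounded level set of $C$, hence trace out a periodic orbit. Reconstructing $\gamma$ from $(\psi,F,G)$ with $F=2(H-\lambda)$, one then shows that $\gamma$ is contained in a bounded region of $\mathbb{R}^{2}$ and, via embeddedness (an irrational-rotation image cannot be an embedded $1$-manifold) together with completeness, must itself be compact -- contradicting the assumption. Therefore $H\equiv 0$, and the $\lambda$-curve equation reduces to $\langle x,\mathbf n\rangle\equiv -2\lambda$, which characterizes $\gamma$ as a straight line at distance $|2\lambda|$ from the origin.

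\medskip
\noindent\textbf{Compact case.} If $\gamma$ is a simple closed embedded curve, Gauss--Bonnet gives $\int_{\gamma}H\,ds=2\pi>0$, so by the sign-preservation above $H>0$ everywhere and $\gamma$ is convex. Parameterize by the outward normal angle $\theta$ and introduce the support function $p(\theta)=\langle x(\theta),\mathbf n(\theta)\rangle$. The identity $1/H=p+p''$ combined with $H=\lambda+p/2$ yields the autonomous ODE
\begin{equation*}
p''+p=\frac{2}{2\lambda+p},
\end{equation*}
with $p$ required to be $2\pi$-periodic. This ODE admits the conserved Hamiltonian $E(p,p')=(p')^{2}+p^{2}-4\log(2\lambda+p)$ and a unique equilibrium $p\equiv r_{0}:=\sqrt{\lambda^{2}+2}-\lambda$, corresponding to the round circle of radius $r_{0}$. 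Linearizing at the equilibrium gives frequency $\omega_{0}=\sqrt{1+r_{0}^{2}/2}\in(1,\sqrt{2}]$ when $\lambda\geq 0$, so the linear period is strictly less than $2\pi$. A period-function analysis paralleling the Abresch--Langer treatment of $\lambda=0$ then shows that the full period $T(E)$ stays strictly below $2\pi$ on every non-trivial orbit, ruling out all non-constant $2\pi$-periodic solutions and forcing $\gamma$ to be the circle of radius $r_{0}$.

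\medskip
\noindent\textbf{Main obstacle.} The subtle step is the period-function analysis in the compact case: verifying that $T(E)<2\pi$ holds globally, not just infinitesimally, for every $\lambda\geq 0$. This generalizes the Abresch--Langer classification of closed embedded self-shrinker curves and is where the hypothesis $\lambda\geq 0$ is used in an essential way (for $\lambda<0$ one expects extra closed solutions). The non-compact case, while cleaner, also requires care in passing from a bounded orbit in $(H,G)$ to compactness of $\gamma$ itself.
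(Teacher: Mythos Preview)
Your setup coincides with the paper's: the relation $2H'=H\langle x,\gamma'\rangle$, the dichotomy $H\equiv 0$ versus $H$ of fixed sign, and the conserved quantity (your $C$ is $4$ times the paper's $E=H_\theta^2+H^2-\log H-2\lambda H$). The reduction of the non-compact case to $H\equiv 0$ via boundedness of $|x|^2=G^2+4(H-\lambda)^2$ is also essentially the paper's argument, though the paper phrases it more directly through $H=Ce^{|x|^2/4}$ together with $H\le |x|/2+|\lambda|$.

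The genuine gap is in the compact case, and it is twofold. First, as you yourself flag, the global period-function bound is not proved; you only check the linearization. Second, and more seriously, the stated criterion is insufficient even in principle. A non-constant $2\pi$-periodic support function need not have \emph{minimal} period $2\pi$: any solution with minimal period $2\pi/k$ for an integer $k\ge 2$ is also $2\pi$-periodic and yields a legitimate embedded convex curve (with $k$-fold symmetry). Thus proving $T(E)<2\pi$ rules out only $k=1$, which is already excluded by the four-vertex theorem. To make an Abresch--Langer style argument work you would need both bounds $\pi<T(E)<2\pi$ for every non-trivial level $E$, and you have not addressed the lower bound at all.

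The paper avoids this entirely. After invoking the four-vertex theorem to get the half-period $\bar\theta=T/2\le \pi/2$, it computes
\[
2\int_0^{T/2}\sin 2\theta\,\frac{H_\theta}{H}\,d\theta
= 2\sin T\Bigl[\tfrac12 - H^2(\tfrac{T}{2})+\lambda H(\tfrac{T}{2})\Bigr]
-6\lambda\int_0^{T/2}\sin 2\theta\,H_\theta\,d\theta
\]
via the third-order identity $(H^2)_{\theta\theta\theta}+4(H^2)_\theta=2H_\theta/H+6\lambda H_\theta$ and integration by parts. On $[0,T/2]$ one has $H_\theta\le 0$ and $\sin 2\theta\ge 0$, so the left side is nonpositive; since $H(T/2)$ is a nondegenerate minimum and $\lambda\ge 0$, the right side is nonnegative. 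This forces $H$ to be constant. The hypothesis $\lambda\ge 0$ enters exactly in the sign of the last term, which is where your intuition about its role is correct, but the mechanism is an integral identity rather than a period estimate.
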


In contrast to embedded self-shrinking curves, the dynamical pictures suggest that there exist some embedded $\lambda$-curves with $\lambda<0$ which are not round circles. There also exist Abresch-Langer type curves for immersed $\lambda$-curves; see \cite{CH1} for more details.

\vspace{3mm}
In the last part, we give a Bernstein type theorem for $\lambda$-hypersurfaces, which generalizes Ecker and Huisken's result \cite{EH1}.

\begin{theorem}\label{1.5}
If a $\lambda$-hypersurface $\Sigma^n \subset \mathbb{R}^{n+1}$ is an entire graph with polynomial volume growth satisfying $H-\frac{\langle x,\mathbf{n}\rangle}{2}=\lambda$, then $\Sigma$ is a hyperplane.

\end{theorem}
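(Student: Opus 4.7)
The plan is to adapt Ecker--Huisken's argument for self-shrinker entire graphs by combining a sharp drift-Laplacian identity with a Gaussian-weighted cutoff integration. Since $\Sigma$ is an entire graph over some hyperplane with unit normal $\omega \in \mathbb{R}^{n+1}$, the angle function $w := \langle \mathbf{n}, \omega \rangle$ never vanishes and may be taken positive on $\Sigma$. Consequently $f := -\log w \geq 0$ is smooth and globally defined, and the whole argument revolves around a differential equation for $f$ with respect to the drift Laplacian $L := \Delta_\Sigma - \tfrac{1}{2}\langle x, \nabla \cdot \rangle$.

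The key identity I would establish is that, on any $\lambda$-hypersurface and for any fixed $\omega \in \mathbb{R}^{n+1}$,
\[
L\,w \;=\; -\,|A|^2\,w.
\]
The derivation is short: the standard computation gives $\Delta_\Sigma w = \langle \nabla H, \omega\rangle - |A|^2 w$ for any hypersurface. Because $\lambda$ is constant, the equation $H = \lambda + \tfrac{1}{2}\langle x, \mathbf{n}\rangle$ yields $\nabla H = \tfrac{1}{2}\nabla \langle x, \mathbf{n}\rangle$; a brief calculation using the Weingarten relation then gives $\langle \nabla H, \omega \rangle = \tfrac{1}{2} A(x^T, \omega^T)$ and $\langle x, \nabla w \rangle = A(x^T, \omega^T)$. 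The two $A(x^T, \omega^T)$ contributions cancel exactly in $Lw = \Delta w - \tfrac{1}{2}\langle x, \nabla w\rangle$, leaving $-|A|^2 w$. Dividing by $w$ and using $\nabla f = -\nabla w / w$, an elementary manipulation produces
\[
L\,f \;=\; |A|^2 + |\nabla f|^2.
\]

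For the last step, let $\psi_R$ be a standard radial cutoff equal to $1$ on $B_R \cap \Sigma$, supported in $B_{2R}\cap \Sigma$, with $|\nabla \psi_R| \leq C/R$. Because $L$ is formally self-adjoint with respect to $d\mu := e^{-|x|^2/4}\,dA$, testing $Lf$ against $\psi_R^2$ and applying Cauchy--Schwarz in the form $2\psi_R |\nabla f||\nabla \psi_R| \leq \psi_R^2 |\nabla f|^2 + |\nabla \psi_R|^2$ yields
\[
\int_\Sigma (|A|^2 + |\nabla f|^2)\psi_R^2\,d\mu \;\leq\; \int_\Sigma \psi_R^2 |\nabla f|^2\,d\mu + \int_\Sigma |\nabla \psi_R|^2\,d\mu.
\]
The $|\nabla f|^2$ terms cancel, leaving $\int_\Sigma |A|^2 \psi_R^2\,d\mu \leq \int_\Sigma |\nabla \psi_R|^2\,d\mu$. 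Polynomial volume growth implies $\mu(\Sigma) < \infty$ (the Gaussian weight defeats any polynomial), so the right-hand side is bounded by $C/R^2 \cdot \mu(\Sigma) \to 0$ as $R \to \infty$. Hence $|A| \equiv 0$ on $\Sigma$, forcing $\Sigma$ to be a hyperplane.

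The only substantive obstacle is the identity $Lw = -|A|^2 w$; once it is in hand, the rest is the routine weighted cutoff argument familiar from Ecker--Huisken and Colding--Minicozzi. The point is that although the $\lambda$-hypersurface equation differs from the self-shrinker equation by a constant, that constant disappears upon taking $\nabla H$, so the same drift-eigenfunction relation for $w$ survives unchanged, and the $\lambda$ never re-enters the argument.
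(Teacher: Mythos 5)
Your proposal is correct and follows essentially the same route as the paper: your key identity $\mathcal{L}w=-|A|^2w$ for the angle function is exactly the paper's Lemma \ref{5.180} (namely $L\langle v,\mathbf{n}\rangle=\tfrac12\langle v,\mathbf{n}\rangle$) rewritten for the drift Laplacian $\mathcal{L}=L-|A|^2-\tfrac12$, and the conclusion is drawn by the same Gaussian-weighted integration using polynomial volume growth. The only cosmetic difference is that you work with $-\log w$ where the paper uses $1/\langle v,\mathbf{n}\rangle$; if anything, your explicit cutoff argument with the exact cancellation of the $|\nabla f|^2$ terms is the cleaner way to justify the integration, since it needs only $\int_\Sigma e^{-|x|^2/4}<\infty$ and no a priori control on the reciprocal of the angle function.
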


\begin{remark}
A similar result is also obtained later by Cheng and Wei \cite{CW2} under the assumption of properness instead of polynomial volume growth. Note that they proved properness of $\lambda$-hypersurfaces  implies polynomial volume growth; see Theorem 9.1 in \cite{CW1}.
\end{remark}

Part of the reason we are interested in $\lambda$-hypersurfaces is that closed $\lambda$-hypersurfaces ($\lambda\geq 0$) behave nicely under the rescaled mean curvature flow; see the discussion in the Appendix.

\begin{acknowledgements}
The author would like to thank Professor William Minicozzi for his valuable and constant support.
\end{acknowledgements}

\section{Background and Preliminaries}
In this section, we recall some background and collect several useful formulas for $\lambda$-hypersurfaces. Throughout this paper, we always assume hypersurfaces to be smooth complete embedded, without boundary and with polynomial volume growth.

\subsection{Notion and conventions} 
Let $\Sigma \subset \mathbb{R}^{n+1}$ be a hypersurface. Then $\nabla_{\Sigma}$, div, and $\Delta$ are the gradient, divergence, and Laplacian, respectively, on $\Sigma$. $\mathbf{n}$ is the outward unit normal, $H$=div$_{\Sigma}\mathbf{n}$ is the mean curvature, $A$ is the second fundamental form, and $x$ is the position vector. With this convection, the mean curvature $H$ is $n/r$ on the sphere $\mathbb{S}^n\subset \mathbb{R}^{n+1}$ of radius $r$. If $e_i$ is an orthonormal frame for $\Sigma$, then the coefficients of the second fundamental form are defined to be $a_{ij}=\langle \nabla_{e_i}e_j,\mathbf{n}\rangle$.

\subsection{Simons type identity}
Now we will derive a Simons type identity for $\lambda$-hypersurface $\Sigma$ which plays a key role in our proof of Theorem \ref{1.1}.
First, recall the operators $\mathcal{L}$ and $L$ from \cite{CM1} defined by 
\begin{equation}
\mathcal{L}=\Delta-\frac{1}{2}\langle x, \nabla \cdot \rangle,
\end{equation}
\begin{equation}
L=\Delta-\frac{1}{2}\langle x, \nabla \cdot \rangle+\abs{A}^2+\frac{1}{2}.
\end{equation}

\begin{lemma}\label{2.10}
If $\Sigma^n\subset \mathbb{R}^{n+1}$ is a $\lambda$-hypersurface satisfying $H-\frac{\langle x,\mathbf{n}\rangle}{2}=\lambda$, then
\begin{equation}\label{2.11}
LA=A-\lambda A^2,
\end{equation}

\begin{equation}\label{2.12}
LH=H+\lambda |A|^2,
\end{equation}

\begin{equation}\label{2.13}
\mathcal{L}|A|^2=2\Big(\frac{1}{2}-|A|^2\Big)|A|^2-2\lambda\langle A^2,A\rangle+2|\nabla A|^2.
\end{equation}
\end{lemma}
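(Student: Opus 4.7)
The plan is to follow the Colding--Minicozzi derivation of the self-shrinker identities $LA=A$, $LH=H$ and the self-shrinker version of (2.13), and to track carefully the extra terms produced by the constant $\lambda$ on the right-hand side of the $\lambda$-hypersurface equation. The starting point is Simons' identity for an arbitrary hypersurface in $\mathbb{R}^{n+1}$, which expresses $\Delta a_{ij}$ as a combination of $\nabla_i\nabla_j H$, $H(A^2)_{ij}$ and $|A|^2 a_{ij}$. Adding the zero-order terms of $L$ immediately cancels the $|A|^2 a_{ij}$ piece, so that the whole computation of (2.11) reduces to a careful evaluation of $\nabla_i\nabla_j H$.

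To carry out that evaluation I would differentiate $H=\lambda+\tfrac12\langle x,\mathbf{n}\rangle$ twice. Using $\nabla_i\mathbf{n}=-a_{ik}e_k$ and $\langle e_j,\mathbf{n}\rangle=0$ one gets $\nabla_j H=-\tfrac12 a_{jk}\langle x,e_k\rangle$. Differentiating again at the center of normal coordinates produces three pieces: a term $(\nabla_i a_{jk})\langle x,e_k\rangle$ which, by Codazzi ($\nabla_i a_{jk}=\nabla_k a_{ij}$), equals $\langle x,\nabla a_{ij}\rangle$; a Kronecker contribution $a_{jk}\delta_{ik}=a_{ij}$; and a term $a_{ik}a_{jk}\langle x,\mathbf{n}\rangle$ in which one substitutes the $\lambda$-hypersurface equation $\langle x,\mathbf{n}\rangle=2(H-\lambda)$. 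Plugging the resulting expression for $\nabla_i\nabla_j H$ back into Simons' identity and absorbing the $|A|^2$ and $\tfrac12$ terms into the definition of $L$, the $H(A^2)_{ij}$ contributions coming from Simons and from $\nabla^2 H$ cancel exactly, while the $\lambda$-correction from $\langle x,\mathbf{n}\rangle=2(H-\lambda)$ survives as $-\lambda(A^2)_{ij}$. This is precisely (2.11).

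Equation (2.12) then follows by tracing (2.11): since the trace of the second fundamental form is (up to sign) $H$ and $\mathrm{tr}(A^2)=|A|^2$, the trace of $LA=A-\lambda A^2$ is $LH=H+\lambda|A|^2$. For (2.13) I would use the standard Bochner-type identity
\begin{equation*}
\mathcal{L}|A|^2 \;=\; 2|\nabla A|^2 + 2\langle A,\mathcal{L}A\rangle,
\end{equation*}
which is verified directly from $\nabla|A|^2=2\langle A,\nabla A\rangle$ and $\Delta|A|^2=2|\nabla A|^2+2\langle A,\Delta A\rangle$. Writing $\mathcal{L}A=LA-(|A|^2+\tfrac12)A=\tfrac12 A-|A|^2A-\lambda A^2$ and pairing with $A$ gives $|A|^2-2|A|^4-2\lambda\langle A^2,A\rangle$, which is exactly the right-hand side of (2.13) minus the $2|\nabla A|^2$ term.

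The main obstacle is the commutator bookkeeping in the computation of $\nabla_i\nabla_j H$: one must interchange covariant derivatives correctly, apply Codazzi to reshape the $(\nabla_i a_{jk})\langle x,e_k\rangle$ piece into the clean form $\langle x,\nabla a_{ij}\rangle$, and be consistent about the sign convention for $\mathbf{n}$ and $a_{ij}$, so that the $H A^2$ terms coming from Simons and from $\nabla^2 H$ really do cancel and leave the single correction $-\lambda A^2$. Once this cancellation is verified, (2.12) and (2.13) follow mechanically as described above.
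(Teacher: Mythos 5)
Your proposal is correct and follows essentially the same route as the paper: Simons' identity for $\Delta A$, a direct second differentiation of $H=\lambda+\tfrac12\langle x,\mathbf{n}\rangle$ using Codazzi to produce $2\,\mathrm{Hess}_H=-\nabla_{x^T}A-A-\langle x,\mathbf{n}\rangle A^2$, the cancellation leaving $-\lambda A^2$, then tracing for (\ref{2.12}) and the Bochner-type identity $\mathcal{L}|A|^2=2\langle\mathcal{L}A,A\rangle+2|\nabla A|^2$ for (\ref{2.13}). No gaps; this is the paper's argument.
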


\begin{remark}
More general results of above formulas were already obtained by Colding and Minicozzi; see Proposition 1.2 in \cite{CM16}. For completeness we also include a proof here. Note that when $\lambda=0$, these formulas are just Simons' equations for self-shrinkers in \cite{CM1}. 
\end{remark}

\begin{proof}[Proof of Lemma \ref{2.10}]
Recall that for a general hypersurface, the second fundamental form $A$ satisfies 
\begin{equation}\label{2.15}
\Delta A=-|A|^2 A-HA^2-Hess_{H}.
\end{equation}
Now we fix a point $p\in \Sigma$, and choose a local orthonormal frame $e_i$ for $\Sigma$ such that its tangential covariant derivatives vanish. So at this point, we have $\nabla_{e_i}e_j=a_{ij}\mathbf{n}$. Thus,
\begin{equation}\label{2.16}
\begin{split}
2Hess_H(e_i,e_j)&=\nabla_{e_j}\nabla_{e_i}\langle x,\mathbf{n}\rangle =\langle x, -a_{ik}e_k \rangle_j \\&=-a_{ikj}\langle x, e_k \rangle-a_{ij}-a_{ik}a_{jk}\langle x,\mathbf{n}\rangle \\ &=-(\nabla_{x^{T}} A)(e_i,e_j)-A(e_i,e_j)-\langle x,\mathbf{n}\rangle A^2(e_i,e_j).
\end{split}
\end{equation}
Combining (\ref{2.15}) with (\ref{2.16}) gives
\begin{equation}
LA=\Delta A-\frac{1}{2}\nabla_{x^{T}}A+\Big(\frac{1}{2}+|A|^2\Big)A=A-\Big(H-\frac{\langle x,\mathbf{n}\rangle}{2}\Big)A^2=A-\lambda A^2.
\end{equation}
This gives (\ref{2.11}) and taking the trace gives (\ref{2.12}). 
For (\ref{2.13}), we have that
\begin{equation}\label{2.17}
\begin{split}
\mathcal{L}|A|^2 & = 2\langle \mathcal{L}A,A \rangle + 2 |\nabla A|^2 \\& = 2 |A|^2 - 2\lambda\langle A^2,A\rangle-2\Big(\frac{1}{2}+ |A|^2 \Big)|A|^2 +2 |\nabla A|^2 \\& = 2\Big(\frac{1}{2}-\abs{A}^2\Big)\abs{A}^2-2\lambda\langle A^2,A \rangle + 2 |\nabla A|^2.
\end{split}
\end{equation}
This completes the proof.
\end{proof}

\subsection{Weighted integral estimates for $\abs{A}$}
In this subsection, we prove a result which will justify our integration when hypersurfaces are non-compact and with bounded $|A|$.

\begin{proposition}\label{2.20}
If $\Sigma^n\subset \mathbb{R}^{n+1}$ is a complete $\lambda$-hypersurface with polynomial volume growth satisfying $|A|\leq C_0$, then 
\begin{equation}\label{2.21}
\int_{\Sigma} |\nabla A|^2 e^{-\frac{|x|^2}{4}}<\infty.
\end{equation}
\end{proposition}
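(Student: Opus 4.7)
The plan is to derive the bound from the Simons-type identity (\ref{2.13}), combined with a radial cutoff and the weighted self-adjointness of $\mathcal{L}$.

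First, I would solve (\ref{2.13}) for the gradient term to get
\begin{equation*}
2|\nabla A|^2 \;=\; \mathcal{L}|A|^2 \;-\; 2\Bigl(\tfrac{1}{2} - |A|^2\Bigr)|A|^2 \;+\; 2\lambda\,\langle A^2, A\rangle.
\end{equation*}
Since $|A| \le C_0$, the last two terms on the right are pointwise bounded by a constant $C = C(C_0, \lambda)$. So the task reduces to estimating $\int \phi^2\,\mathcal{L}|A|^2\,e^{-|x|^2/4}$ for a suitable cutoff and then passing to the limit.

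Next I would take a standard Lipschitz cutoff $\phi = \phi_R \ge 0$ with $\phi_R \equiv 1$ on $B_R \cap \Sigma$, $\phi_R \equiv 0$ outside $B_{2R}\cap \Sigma$, and $|\nabla \phi_R| \le C/R$. The operator $\mathcal{L}$ is symmetric in the weighted $L^2$ inner product (integration by parts with weight $e^{-|x|^2/4}$), so
\begin{equation*}
\int_\Sigma \phi^2\,\mathcal{L}|A|^2 \,e^{-\frac{|x|^2}{4}}
\;=\; -\,\int_\Sigma \big\langle \nabla(\phi^2),\, \nabla |A|^2\big\rangle\, e^{-\frac{|x|^2}{4}}.
\end{equation*}
Using $|\nabla |A|^2| \le 2|A||\nabla A|$ and the AM-GM inequality,
\begin{equation*}
\bigl|\langle \nabla(\phi^2),\nabla|A|^2\rangle\bigr| \;\le\; 4\phi\,|\nabla\phi|\,|A|\,|\nabla A|
\;\le\; \phi^2|\nabla A|^2 + 4|\nabla\phi|^2\,|A|^2.
\end{equation*}
Plugging this into the rearranged identity, the $\phi^2|\nabla A|^2$ on the right is absorbed into the left, yielding
\begin{equation*}
\int_\Sigma \phi^2 |\nabla A|^2\,e^{-\frac{|x|^2}{4}}
\;\le\; 4\int_\Sigma |\nabla\phi|^2\,|A|^2\,e^{-\frac{|x|^2}{4}}
\;+\; C\int_\Sigma \phi^2\,e^{-\frac{|x|^2}{4}}.
\end{equation*}

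Finally, to conclude, I would invoke the polynomial volume growth hypothesis together with the Gaussian weight: this guarantees $\int_\Sigma e^{-|x|^2/4} < \infty$, so both integrals on the right are bounded uniformly in $R$ (the cutoff satisfies $|\nabla\phi_R|^2 \le C/R^2$ and $|A|^2 \le C_0^2$). Letting $R \to \infty$ and using monotone convergence on the left gives (\ref{2.21}).

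The only genuinely delicate point is the integration by parts when moving $\mathcal{L}$ off $|A|^2$; on a non-compact $\Sigma$ this is precisely why the cutoff $\phi$ is used, and justifying the absorption of the $\phi^2|\nabla A|^2$ term requires knowing a priori that the left-hand side is finite for the compactly supported cutoff, which is automatic since $\phi$ has compact support and $|\nabla A|$ is locally bounded by elliptic theory (applied to the equation $LA = A - \lambda A^2$ with bounded coefficients under $|A|\le C_0$). Everything else is a direct cutoff argument modeled on the self-shrinker case $\lambda = 0$.
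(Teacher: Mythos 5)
Your proposal is correct and follows essentially the same route as the paper: rearrange the Simons-type identity (\ref{2.13}) using $|A|\le C_0$ to bound all terms except $|\nabla A|^2$ by a constant, integrate against a compactly supported cutoff squared, move $\mathcal{L}$ off $|A|^2$ via the weighted integration-by-parts lemma, absorb the gradient term, and finish with polynomial volume growth and monotone convergence. The only differences are cosmetic (the paper fixes $|\nabla\phi|\le 1$ and $\epsilon=1/2$ in the absorbing inequality rather than a $C/R$ cutoff), and your remark on why the absorption is legitimate for compactly supported $\phi$ is a point the paper leaves implicit.
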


The proof of Proposition \ref{2.20} relies on the following two lemmas from \cite{CM1} which show that the linear operator $\mathcal{L}$ is self-adjoint in a weighted $L^2$ space.

\begin{lemma}[\cite{CM1}]\label{2.23}
If $\Sigma\subset\mathbb{R}^{n+1}$ is a hypersurface, $u$ is a $C^1$ function with compact support, and $v$ is a $C^2$ function, then
\begin{equation}\label{2.24}
\int_{\Sigma} u(\mathcal{L}v)e^{-\frac{|x|^2}{4}}=-\int_\Sigma \langle \nabla u, \nabla v\rangle e^{-\frac{|x|^2}{4}}.
\end{equation}
\end{lemma}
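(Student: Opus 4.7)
The plan is to observe that the drift Laplacian $\mathcal{L}$ is nothing other than the Laplacian associated with the Gaussian weight, i.e.\ it can be written in divergence form as
\begin{equation*}
e^{-\frac{|x|^2}{4}}\,\mathcal{L}v \;=\; \mathrm{div}\!\left(e^{-\frac{|x|^2}{4}}\nabla v\right).
\end{equation*}
Once this identity is in hand, the lemma follows from one application of the divergence theorem on $\Sigma$, with boundary terms vanishing because $u$ has compact support.

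To verify the divergence-form identity, I would first compute $\nabla\bigl(e^{-|x|^2/4}\bigr)=-\tfrac{1}{2}\,x^{T}\,e^{-|x|^2/4}$ on $\Sigma$, where $x^{T}$ denotes the tangential part of the position vector (this uses only the chain rule and $\nabla |x|^{2}/2 = x^{T}$). Then expanding the divergence by the product rule,
\begin{equation*}
\mathrm{div}\!\left(e^{-\frac{|x|^2}{4}}\nabla v\right)
= e^{-\frac{|x|^2}{4}}\,\Delta v + \langle \nabla e^{-\frac{|x|^2}{4}},\nabla v\rangle
= e^{-\frac{|x|^2}{4}}\!\left(\Delta v - \tfrac{1}{2}\langle x^{T},\nabla v\rangle\right),
\end{equation*}
and since $\langle x^{T},\nabla v\rangle=\langle x,\nabla v\rangle$ (the normal component of $x$ pairs trivially with the tangential $\nabla v$), the bracket on the right is exactly $\mathcal{L}v$.

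Next I would multiply by $u$ and integrate over $\Sigma$:
\begin{equation*}
\int_{\Sigma} u\,(\mathcal{L}v)\,e^{-\frac{|x|^2}{4}}
= \int_{\Sigma} u\,\mathrm{div}\!\left(e^{-\frac{|x|^2}{4}}\nabla v\right).
\end{equation*}
Because $u$ is $C^{1}$ with compact support, the vector field $u\,e^{-|x|^2/4}\nabla v$ is compactly supported and $C^{1}$, so Stokes' theorem applies on the boundaryless manifold $\Sigma$. Applying the product rule $\mathrm{div}(u\,W)=u\,\mathrm{div}\,W+\langle\nabla u,W\rangle$ with $W=e^{-|x|^2/4}\nabla v$ and integrating yields the desired identity.

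I do not expect any real obstacle: the only subtlety is that $\Sigma$ may be non-compact, but the compact support of $u$ collapses the integral to a compact subdomain where the classical divergence theorem applies without issue; the assumed $C^{2}$ regularity of $v$ is exactly what is needed for $\mathcal{L}v$ to be a continuous function on that subdomain.
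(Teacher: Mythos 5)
Your proof is correct and is exactly the standard argument (the one in \cite{CM1}, which this paper simply cites): writing $e^{-|x|^2/4}\mathcal{L}v=\mathrm{div}_\Sigma\bigl(e^{-|x|^2/4}\nabla v\bigr)$ via $\nabla_\Sigma|x|^2=2x^T$ and then integrating by parts against the compactly supported $u$. No gaps.
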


\begin{lemma}[\cite{CM1}]\label{2.25}
Suppose that $\Sigma\subset\mathbb{R}^{n+1}$ is a complete hypersurface without boundary. If $u,v$ are $C^2$ functions with
\begin{equation}\label{2.26}
\int_\Sigma \big( |u\nabla v| + |\nabla u||\nabla v|+|u\mathcal{L}v| \big) e^{-\frac{|x|^2}{4}}<\infty,
\end{equation}
then we get
\begin{equation}\label{2.27}
\int_{\Sigma} u(\mathcal{L}v)e^{-\frac{|x|^2}{4}}=-\int_\Sigma \langle \nabla u, \nabla v\rangle e^{-\frac{\abs{x}^2}{4}}.
\end{equation}
\end{lemma}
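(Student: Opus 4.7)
The plan is a standard cutoff-and-pass-to-the-limit argument, using Lemma 2.23 as the base case for compactly supported test functions. The integrability hypothesis (2.26) is precisely what is needed to invoke dominated convergence on each of the three terms that arise.

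First, I would fix a family of cutoff functions $\phi_j \in C^\infty_c(\mathbb{R}^{n+1})$ with $\phi_j \equiv 1$ on the Euclidean ball $B_j$ of radius $j$ centered at the origin, $\phi_j \equiv 0$ outside $B_{2j}$, and $|\nabla_{\mathbb{R}^{n+1}}\phi_j| \leq 2/j$. Restricting $\phi_j$ to $\Sigma$ yields a smooth compactly supported function with $|\nabla_\Sigma \phi_j| \leq |\nabla_{\mathbb{R}^{n+1}}\phi_j| \leq 2/j$, so the product $\phi_j u$ is an admissible test function for Lemma 2.23. Applying that lemma to the pair $(\phi_j u, v)$ and expanding $\nabla(\phi_j u) = \phi_j \nabla u + u \nabla \phi_j$ gives
\begin{equation*}
\int_\Sigma \phi_j u (\mathcal{L}v) e^{-|x|^2/4} = -\int_\Sigma \phi_j \langle \nabla u,\nabla v\rangle e^{-|x|^2/4} - \int_\Sigma u\langle \nabla \phi_j,\nabla v\rangle e^{-|x|^2/4}.
\end{equation*}

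Next I would let $j\to\infty$ and verify each term separately. On the left, the integrand converges pointwise to $u(\mathcal{L}v) e^{-|x|^2/4}$ and is dominated by $|u\mathcal{L}v| e^{-|x|^2/4}$, which is in $L^1(\Sigma)$ by (2.26); dominated convergence yields the desired limit. The same reasoning applies to the first term on the right, using the dominant $|\nabla u||\nabla v|e^{-|x|^2/4}$. For the remaining cutoff-derivative term, I would estimate
\begin{equation*}
\Bigl| \int_\Sigma u\langle \nabla \phi_j,\nabla v\rangle e^{-|x|^2/4} \Bigr| \leq \frac{2}{j}\int_\Sigma |u||\nabla v|e^{-|x|^2/4},
\end{equation*}
and the right-hand side tends to $0$ because the integral on the right is finite by (2.26).

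The argument is genuinely routine; there is no real obstacle beyond bookkeeping. The only point requiring minor care is that Lemma 2.23 is stated for a $C^1$ test function $u$ with compact support, so one must confirm that $\phi_j u$ inherits the required regularity and compact support from $\phi_j$ — which is immediate — and that the extrinsic Euclidean cutoff gives the right gradient bound on $\Sigma$, which follows from the fact that the tangential gradient has norm at most the ambient gradient. Combining the three limits produces exactly (2.27).
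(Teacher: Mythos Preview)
The paper does not supply its own proof of this lemma; it is quoted verbatim from \cite{CM1}. Your cutoff-and-dominated-convergence argument is exactly the standard proof (and is how Colding--Minicozzi prove it), and all three limit passages are justified correctly by the three summands in hypothesis~(2.26).

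One small technical point is worth tightening. You take $\phi_j$ to be an \emph{extrinsic} Euclidean cutoff and then assert that $\phi_j|_\Sigma$ has compact support. That inference requires $\Sigma\cap \overline{B}_{2j}$ to be compact, i.e.\ that $\Sigma$ be properly immersed, which is not part of the stated hypothesis ``complete hypersurface without boundary.'' The clean fix---and the one used in \cite{CM1}---is to use \emph{intrinsic} cutoffs: fix $p_0\in\Sigma$ and let $\phi_j$ equal $1$ on the intrinsic ball $B^\Sigma_j(p_0)$, vanish outside $B^\Sigma_{2j}(p_0)$, with $|\nabla_\Sigma\phi_j|\le 1$. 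Completeness of $\Sigma$ and Hopf--Rinow then guarantee compact support, and the rest of your argument goes through unchanged (the error term is bounded by $\int_{\{j\le d_\Sigma(p_0,\cdot)\le 2j\}} |u||\nabla v|\,e^{-|x|^2/4}$, which tends to zero by dominated convergence since the integrand is globally $L^1$). With that adjustment the proof is complete and matches the source.
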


\begin{proof}[Proof of Proposition \ref{2.20}]
By Lemma \ref{2.10} and $|A|\leq C_0$, we have
\begin{equation}\label{2.30}
\begin{split}
\mathcal{L}|A|^2 & = 2\Big(\frac{1}{2}-|A|^2\Big)|A|^2 - 2\lambda\langle A^2,A \rangle + 2|\nabla A|^2 \\& \geq 2\Big(\frac{1}{2}-|A|^2 \Big)|A|^2-2\abs{\lambda}|A|^3 + 2|\nabla A|^2  \geq 2|\nabla A|^2 - C,
\end{split}
\end{equation}
where $C$ is a positive constant depending only on $\lambda$ and $C_0$. We  allow $C$ to change from line to line. 
For any smooth function $\phi$ with compact support, we integrate (\ref{2.30}) against $\frac{1}{2}\phi^2$. 

By Lemma \ref{2.23}, we obtain that
\begin{equation}
-2\int_\Sigma \phi|A|\langle \nabla \phi, \nabla |A|\rangle e^{-\frac{|x|^2}{4}} \geq \int_\Sigma \phi^2( |\nabla A|^2 - C )e^{-\frac{|x|^2}{4}}.
\end{equation}  
Using the absorbing inequality $\epsilon a^2+\frac{b^2}{\epsilon}\geq 2ab$ gives
\begin{equation}\label{2.34}
\int_\Sigma (\epsilon \phi^2 |\nabla |A||^2 +\frac{1}{\epsilon}|A|^2 |\nabla \phi|^2) e^{-\frac{|x|^2}{4}}\geq \int_\Sigma \phi^2(|\nabla A|^2 - C ) e^{-\frac{|x|^2}{4}}.
\end{equation}
Now we choose $\abs{\phi}\leq 1$, $\abs{\nabla \phi} \leq 1$ and $\epsilon=1/2$. Combining this with $\abs{\nabla A} \geq |\nabla |A||$, we see that (\ref{2.34}) gives
\begin{equation}
\int_\Sigma (4|A|^2+C)e^{-\frac{|x|^2}{4}}\geq \int_\Sigma \phi^2 |\nabla A|^2 e^{-\frac{|x|^2}{4}}.
\end{equation}
The conclusion follows from the monotone convergence theorem and the fact that $\Sigma$ has polynomial volume growth.

\end{proof}

A direct consequence of Proposition \ref{2.20} and Lemma \ref{2.25} is the following corollary.

\begin{corollary}\label{2.35}
If $\Sigma^n\subset \mathbb{R}^{n+1}$ is a complete $\lambda$-hypersurface with polynomial volume growth satisfying $|A| \leq C_0$, then 
\begin{equation}\label{2.36}
\int_\Sigma \mathcal{L}|A|^2 e^{-\frac{|x|^2}{4}}=0.
\end{equation}
\end{corollary}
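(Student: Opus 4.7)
The plan is to apply Lemma~\ref{2.25} directly with $u\equiv 1$ and $v=|A|^2$. With this choice, $\nabla u = 0$, and the right-hand side of (\ref{2.27}) vanishes identically, so the desired identity (\ref{2.36}) will follow as soon as the integrability hypothesis (\ref{2.26}) is verified. The only real work, then, is checking that the three integrands
\begin{equation*}
|\nabla v|, \qquad |\nabla u||\nabla v|, \qquad |\mathcal{L}v|
\end{equation*}
are absolutely integrable against $e^{-|x|^2/4}$ on $\Sigma$.

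The middle term is zero, so it suffices to control $|\nabla|A|^2| = 2|A||\nabla|A||$ and $|\mathcal{L}|A|^2|$. For the gradient term, Cauchy--Schwarz gives
\begin{equation*}
\int_\Sigma |A||\nabla|A||\, e^{-\frac{|x|^2}{4}} \leq \left(\int_\Sigma |A|^2 e^{-\frac{|x|^2}{4}}\right)^{1/2}\left(\int_\Sigma |\nabla A|^2 e^{-\frac{|x|^2}{4}}\right)^{1/2},
\end{equation*}
where the first factor is finite because $|A|\leq C_0$ and $\Sigma$ has polynomial volume growth, and the second factor is finite by Proposition~\ref{2.20}. For the $\mathcal{L}|A|^2$ term, I would plug in the Simons-type identity (\ref{2.13}) and bound pointwise
\begin{equation*}
|\mathcal{L}|A|^2|\leq C(|A|^2 + |A|^3) + 2|\nabla A|^2,
\end{equation*}
where $C$ depends only on $\lambda$ and $C_0$; each of these terms is integrable against the Gaussian weight, again using $|A|\leq C_0$ together with the polynomial volume growth for the first group and Proposition~\ref{2.20} for the last.

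With all three integrability conditions verified, Lemma~\ref{2.25} applies and yields
\begin{equation*}
\int_\Sigma \mathcal{L}|A|^2\, e^{-\frac{|x|^2}{4}} = -\int_\Sigma \langle \nabla 1, \nabla |A|^2\rangle e^{-\frac{|x|^2}{4}} = 0,
\end{equation*}
which is precisely (\ref{2.36}). There is no real obstacle here; the entire content lies in recognizing that Proposition~\ref{2.20} is exactly the finiteness needed to legitimize integration by parts against the constant test function, and that the $\lambda$-term $2\lambda\langle A^2, A\rangle$ in the Simons identity does not obstruct integrability since it is pointwise controlled by $|A|^3 \leq C_0^3$.
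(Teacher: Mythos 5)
Your proof is correct and is exactly the argument the paper intends: it states the corollary as a direct consequence of Proposition~\ref{2.20} and Lemma~\ref{2.25}, and your verification of the integrability hypothesis (\ref{2.26}) with $u\equiv 1$, $v=|A|^2$ (using $|A|\leq C_0$, polynomial volume growth, Kato's inequality, and Proposition~\ref{2.20}) is precisely the content being elided. No issues.
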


\section{Gap Theorems for $\lambda$-hypersurfaces}

\subsection{Proof of Theorem \ref{1.1}}

Now we are ready to prove Theorem \ref{1.1}.
\begin{proof}[Proof of Theorem \ref{1.1}]
By Lemma \ref{2.10}, we have
\begin{equation}\label{3.40}
\frac{1}{2}\mathcal{L}|A|^2 = \Big(\frac{1}{2} - |A|^2 \Big) |A|^2 - \lambda\langle A^2, A \rangle + |\nabla A|^2  \geq \Big(\frac{1}{2}-|A|^2 \Big) |A|^2 - |\lambda| |A|^3 + |\nabla A|^2.
\end{equation}
Proposition \ref{2.20} and Corollary \ref{2.35}  give 
\begin{equation}\label{3.41}
0 = \int_\Sigma \mathcal{L}|A|^2 e^{-\frac{|x|^2}{4}}\geq \int_\Sigma \Big(\frac{1}{2}-|A|^2 - |\lambda| |A|\Big) |A|^2 e^{-\frac{|x|^2}{4}}+\int_\Sigma |\nabla A|^2 e^{-\frac{|x|^2}{4}}.
\end{equation}
Note that when $|A| \leq \frac{\sqrt{\lambda^2+2}-\abs{\lambda}}{2}$, we have 
\begin{equation*}
\frac{1}{2}-|A|^2-|\lambda| |A| \geq 0.
\end{equation*}
This implies the first term of (\ref{3.41}) on the right hand side is nonnegative. 
Therefore, (\ref{3.41}) implies that all inequalities are equalities. Moreover, we have
\begin{equation*}
|\nabla A|=\Big(\frac{1}{2}-|A|^2 - |\lambda||A| \Big)|A|^2=0.
\end{equation*} 
By Theorem 4 of Laswon \cite{LS69} that every smooth hypersurface with $\nabla A=0$ splits isometrically as a product of a sphere and a linear space, we finish the proof.

\end{proof}

By the proof of Theorem \ref{1.1}, we have the following gap result.

\begin{corollary}
If $\Sigma^n \subset \mathbb{R}^{n+1}$ is a smooth complete embedded $\lambda$-hypersurface satisfying $H-\frac{\langle x,\mathbf{n}\rangle}{2}=\lambda$ with polynomial volume growth, which satisfies
\begin{equation}
|A| < \frac{\sqrt{\lambda^2+2}-|\lambda|}{2},
\end{equation}
then $\Sigma$ is a hyperplane in $\mathbb{R}^{n+1}$.
\end{corollary}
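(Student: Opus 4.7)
The plan is to repeat the integral argument used in the proof of Theorem \ref{1.1}, but to exploit the strict inequality in order to force $|A|$ to vanish identically, rather than merely to conclude $\nabla A=0$. Since the hypothesis automatically yields the pointwise bound $|A|\leq \frac{\sqrt{\lambda^2+2}-|\lambda|}{2}$, the boundedness assumption of Proposition \ref{2.20} and Corollary \ref{2.35} is satisfied, so $\int_\Sigma|\nabla A|^2 e^{-|x|^2/4}<\infty$ and $\int_\Sigma \mathcal{L}|A|^2 e^{-|x|^2/4}=0$. Combining these with the Simons-type identity (\ref{2.13}) and the pointwise estimate $|\langle A^2, A\rangle|\leq |A|^3$ reproduces exactly inequality (\ref{3.41}):
\begin{equation*}
0 \;\geq\; \int_\Sigma \Big(\tfrac{1}{2}-|A|^2-|\lambda||A|\Big)|A|^2\,e^{-|x|^2/4}\;+\;\int_\Sigma |\nabla A|^2\,e^{-|x|^2/4}.
\end{equation*}

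Next I would examine the quadratic $q(t)=\tfrac{1}{2}-t^2-|\lambda|t$ in $t=|A|\geq 0$. Its unique nonnegative root is precisely $\tfrac{\sqrt{\lambda^2+2}-|\lambda|}{2}$, so the strict hypothesis $|A|<\tfrac{\sqrt{\lambda^2+2}-|\lambda|}{2}$ makes $q(|A|)$ strictly positive at every point of $\Sigma$. Both integrands in the displayed inequality are therefore pointwise nonnegative, and each integral must vanish separately. Vanishing of the first integrand forces $|A|^2\equiv 0$, i.e.\ $A\equiv 0$ on $\Sigma$.

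Finally, $A\equiv 0$ means $\Sigma$ is totally geodesic in $\mathbb{R}^{n+1}$, and any smooth complete connected totally geodesic hypersurface of Euclidean space is an affine hyperplane, which completes the proof. (One does not need to invoke Lawson's splitting theorem here, as in Theorem \ref{1.1}; the stronger conclusion $A\equiv 0$ makes the classification trivial.) There is no real obstacle beyond checking the strict positivity of $q(|A|)$; the improvement from $\leq$ to $<$ in the hypothesis is exactly what rules out the spheres and cylinders that appear in Theorem \ref{1.1}, since at those examples $|A|$ attains the critical value and $q(|A|)$ vanishes.
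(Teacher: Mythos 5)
Your proof is correct and is essentially the paper's own argument: the paper simply remarks that the corollary follows "by the proof of Theorem \ref{1.1}," and your observation that strict positivity of $\tfrac12-|A|^2-|\lambda||A|$ forces $|A|\equiv 0$ (rather than merely $\nabla A=0$) is exactly the intended refinement.
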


\begin{remark}\label{3.42}
Note that in Theorem \ref{1.1}, when $\Sigma^n$ is a round sphere, this forces $\lambda=0$. we will address this issue in the next subsection and prove a gap theorem for closed $\lambda$-hypersurfaces with arbitrary $\lambda\geq0$.
\end{remark}

\subsection{Gap Theorems for closed $\lambda$-hypersurfaces}
We consider closed $\lambda$-hypersurfaces with $\lambda\geq 0$ in this subsection. 

\begin{lemma}\label{3.43}
If $\Sigma^n\subset \mathbb{R}^{n+1}$ is a smooth $\lambda$-hypersurface, then 
\begin{equation}
\mathcal{L}|x|^2 = 2n - |x|^2 - 2\lambda\langle x, \mathbf{n}\rangle.
\end{equation}
\end{lemma}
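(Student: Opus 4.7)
The plan is a direct computation using only (i) the definition $\mathcal{L} = \Delta - \tfrac{1}{2}\langle x, \nabla\cdot\rangle$, (ii) the standard identity $\Delta_{\Sigma} x = -H\mathbf{n}$ valid for any hypersurface in $\mathbb{R}^{n+1}$ under the sign convention $H = \operatorname{div}_{\Sigma}\mathbf{n}$ used in Section 2.1, and (iii) the defining equation $H - \tfrac{1}{2}\langle x, \mathbf{n}\rangle = \lambda$.

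First I would handle the first-order term. Since $|x|^2 = \langle x, x\rangle$, the intrinsic gradient is $\nabla|x|^2 = 2x^{T}$, where $x^{T} = x - \langle x,\mathbf{n}\rangle\mathbf{n}$ is the tangential projection of the position vector. Hence
\begin{equation*}
\tfrac{1}{2}\langle x, \nabla|x|^2\rangle = \langle x, x^{T}\rangle = |x^{T}|^2 = |x|^2 - \langle x,\mathbf{n}\rangle^{2}.
\end{equation*}

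Next I would compute the Laplacian. Choosing an orthonormal frame $\{e_i\}$ at a point of $\Sigma$, one has $|\nabla x|^2 = \sum_i |e_i|^2 = n$, so
\begin{equation*}
\Delta|x|^2 = 2|\nabla x|^{2} + 2\langle x, \Delta x\rangle = 2n - 2H\langle x,\mathbf{n}\rangle,
\end{equation*}
using $\Delta x = -H\mathbf{n}$. Subtracting the two displays gives
\begin{equation*}
\mathcal{L}|x|^2 = 2n - 2H\langle x,\mathbf{n}\rangle - |x|^{2} + \langle x,\mathbf{n}\rangle^{2}.
\end{equation*}

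Finally, I would substitute $H = \lambda + \tfrac{1}{2}\langle x,\mathbf{n}\rangle$ from the $\lambda$-hypersurface equation, so that $2H\langle x,\mathbf{n}\rangle = 2\lambda\langle x,\mathbf{n}\rangle + \langle x,\mathbf{n}\rangle^{2}$. The $\langle x,\mathbf{n}\rangle^{2}$ terms cancel and one is left with $\mathcal{L}|x|^2 = 2n - |x|^2 - 2\lambda\langle x,\mathbf{n}\rangle$, as claimed. There is no real obstacle here; the only point to keep straight is the sign convention for $H$ and the corresponding formula $\Delta x = -H\mathbf{n}$, which is consistent with the normalization $H = n/r$ on $\mathbb{S}^n(r)$ recorded in Section 2.1. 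As a sanity check, setting $\lambda = 0$ recovers the Colding--Minicozzi identity $\mathcal{L}|x|^2 = 2n - |x|^2$ for self-shrinkers.
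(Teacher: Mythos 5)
Your proposal is correct and follows essentially the same computation as the paper: expand $\mathcal{L}|x|^2$ using $\Delta x=-H\mathbf{n}$, $\nabla|x|^2=2x^T$, and then substitute the $\lambda$-hypersurface equation. No issues.
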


\begin{proof}
Recall that for any hypersurface, we have $\Delta x=-H\mathbf{n}$. Therefore,
\begin{equation}
\begin{split}
\mathcal{L}|x|^2 =\Delta |x|^2-\frac{1}{2}\langle x, \nabla |x|^2 \rangle & =2\langle \Delta x,x \rangle + 2|\nabla x|^2 - |x^{T}|^2 \\& =-2H\langle x,\mathbf{n}\rangle+2n-\abs{x^{T}}^2 \\&= 2n - |x|^2 - 2\lambda\langle x,\mathbf{n}\rangle.
\end{split}
\end{equation}
\end{proof}
A simple application of Lemma \ref{3.43} and the maximum principle give the following corollary.

\begin{corollary}\label{3.44}
Let $\Sigma^n\subset \mathbb{R}^{n+1}$ be a smooth closed $\lambda$-hypersurface with $\lambda\geq 0$. If $|x| \leq \sqrt{\lambda^2+2n}-\lambda$, then $\Sigma$ is a round sphere.
\end{corollary}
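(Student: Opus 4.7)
The plan is to apply the strong maximum principle to $|x|^2$ using the formula from Lemma \ref{3.43}. The key observation is that the hypothesis $|x|\leq \sqrt{\lambda^2+2n}-\lambda$ turns $|x|^2$ into an $\mathcal{L}$-subharmonic function.

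First, I would use Cauchy-Schwarz, $\langle x,\mathbf{n}\rangle \leq |x|$, together with the assumption $\lambda\geq 0$ to bound the $\langle x,\mathbf{n}\rangle$-term in Lemma \ref{3.43}:
\begin{equation*}
\mathcal{L}|x|^2 = 2n - |x|^2 - 2\lambda\langle x,\mathbf{n}\rangle \geq 2n - |x|^2 - 2\lambda|x|.
\end{equation*}
Write $r = \sqrt{\lambda^2+2n}-\lambda$. The quadratic $t^2+2\lambda t - 2n$ has $r$ as its unique positive root, so the hypothesis $|x|\leq r$ is exactly the condition that the right-hand side above is nonnegative. Hence $\mathcal{L}|x|^2 \geq 0$ everywhere on $\Sigma$.

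Next, since $\Sigma$ is closed and $\mathcal{L}=\Delta - \tfrac{1}{2}\langle x,\nabla\cdot\rangle$ is a second-order elliptic operator with no zeroth-order term, the strong maximum principle applies to the $\mathcal{L}$-subharmonic function $|x|^2$ at any interior maximum (working component by component), and forces $|x|$ to be constant on each component. Once $|x|^2$ is constant, $\nabla|x|^2 = 2x^T = 0$, so $x$ is everywhere normal to $\Sigma$; in particular each component of $\Sigma$ is contained in a sphere $\mathbb{S}^n(\rho)$ centered at the origin. By a dimension/connectedness argument (each component is both open and closed in $\mathbb{S}^n(\rho)$), each component equals the full sphere, and embeddedness forces $\Sigma$ itself to be a single sphere $\mathbb{S}^n(\rho)$. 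Substituting $H=n/\rho$ and $\langle x,\mathbf{n}\rangle=\rho$ into the $\lambda$-hypersurface equation yields $\rho^2+2\lambda\rho-2n=0$, so $\rho=r$.

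The only step requiring care is the Cauchy-Schwarz estimate, which crucially uses $\lambda\geq 0$ to fix the sign; the rest is a routine application of the strong maximum principle and elementary geometry. I do not anticipate a real obstacle, as the argument parallels the standard use of Simons-type identities combined with Lemma \ref{3.43}.
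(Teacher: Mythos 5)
Your proof is correct and is precisely the argument the paper intends when it says the corollary follows from "a simple application of Lemma \ref{3.43} and the maximum principle": the hypothesis makes $|x|^2$ an $\mathcal{L}$-subharmonic function, the strong maximum principle on the closed hypersurface forces $|x|$ to be constant, and the $\lambda$-hypersurface equation then pins down the radius. No gaps.
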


Now we are ready to address the issue mentioned in Remark \ref{3.42} and prove the gap theorem for closed $\lambda$-hypersurfaces in terms of $|A|$.

\begin{theorem}\label{3.45}
Let $\Sigma^n\subset \mathbb{R}^{n+1}$ be a smooth closed $\lambda$-hypersurface with $\lambda\geq 0$. If $\Sigma$ satisfies
\begin{equation}\label{3.46}
|A|^2\leq \frac{1}{2}+\frac{\lambda(\lambda+\sqrt{\lambda^2+2n})}{2n},
\end{equation}
then $\Sigma$ is a round sphere with radius $\sqrt{\lambda^2+2n}-\lambda$.
\begin{proof}
Since $\Sigma$ is closed, we consider the point $p$ where $|x|$ achieves its maximum. At point $p$, $x$ and $\mathbf{n}$ are in the same direction. This implies  $2H(p)=2\lambda+|x|(p)$. 

By (\ref{3.46}), we have 
\begin{equation}
\Big(\lambda+\frac{\abs{x}(p)}{2}\Big)^2=H^2(p)\leq n|A|^2 \leq n\Big(\frac{1}{2}+\frac{\lambda(\lambda+\sqrt{\lambda^2+2n})}{2n}\Big).
\end{equation}
This gives 
\begin{equation}
\max_\Sigma |x|\leq |x|(p)\leq \sqrt{\lambda^2+2n}-\lambda.
\end{equation}
By Corollary \ref{3.44}, we conclude that $\Sigma$ is a round sphere.
\end{proof}

\end{theorem}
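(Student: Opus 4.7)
The plan is to reduce the statement to Corollary \ref{3.44}, which already says that any smooth closed $\lambda$-hypersurface with $\lambda\geq 0$ satisfying $|x|\leq \sqrt{\lambda^2+2n}-\lambda$ must be a round sphere. So the entire task is to upgrade the pointwise hypothesis on $|A|^2$ to the pointwise bound $|x|\leq \sqrt{\lambda^2+2n}-\lambda$ on $\Sigma$.

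First I would pick a point $p\in\Sigma$ where $|x|$ attains its maximum; such a $p$ exists because $\Sigma$ is closed. At $p$, the hypersurface $\Sigma$ sits inside the closed ball $\overline{B}_{|x|(p)}(0)$ and touches its boundary sphere tangentially. A short geometric argument, using that $\Sigma$ bounds a region contained in $\overline{B}_{|x|(p)}(0)$, shows that the outer unit normal to $\Sigma$ at $p$ coincides with the outward radial direction of this sphere. Hence $\mathbf{n}(p)=x(p)/|x|(p)$ and $\langle x,\mathbf{n}\rangle(p)=|x|(p)$, and substituting into $H-\langle x,\mathbf{n}\rangle/2=\lambda$ yields $H(p)=\lambda+|x|(p)/2\geq 0$. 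Combining this with the elementary pointwise inequality $H^2\leq n|A|^2$ and the hypothesis \eqref{3.46}, a one-line algebraic check (namely $n+\lambda^2+\lambda\sqrt{\lambda^2+2n}=\tfrac{1}{2}(\lambda+\sqrt{\lambda^2+2n})^2$) gives
\[
\Big(\lambda+\tfrac{|x|(p)}{2}\Big)^2\;\leq\;\Big(\tfrac{\lambda+\sqrt{\lambda^2+2n}}{2}\Big)^2.
\]
Because $\lambda\geq 0$ makes both sides nonnegative, taking square roots yields $|x|(p)\leq \sqrt{\lambda^2+2n}-\lambda$, and this bound propagates to all of $\Sigma$ since $p$ was the maximum point. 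Corollary \ref{3.44} then finishes the argument.

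The only delicate step is the sign in $\mathbf{n}(p)=+x(p)/|x|(p)$: one needs to rule out the opposite orientation at the farthest point. This matters because it is exactly what forces $H(p)\geq 0$ and lets us extract a clean square root whose constant matches that of Corollary \ref{3.44}; with $\langle x,\mathbf{n}\rangle(p)=-|x|(p)$ instead, the resulting quadratic inequality in $|x|(p)$ would have a different shape and the hypothesis on $|A|^2$ would no longer align with the radius $\sqrt{\lambda^2+2n}-\lambda$ appearing in Corollary \ref{3.44}. Beyond this sign analysis, the proof is essentially routine algebra, and I do not anticipate any serious obstacle.
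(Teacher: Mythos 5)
Your proposal is correct and follows essentially the same route as the paper: evaluate at the point where $|x|$ is maximal, use the outward-normal orientation there to get $2H(p)=2\lambda+|x|(p)$, combine with $H^2\leq n|A|^2$ and the hypothesis to bound $\max_\Sigma|x|$ by $\sqrt{\lambda^2+2n}-\lambda$, and invoke Corollary \ref{3.44}. Your explicit justification of the sign $\mathbf{n}(p)=+x(p)/|x|(p)$ is a point the paper states without elaboration, but it is the same argument.
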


We also include another weak gap theorem for closed $\lambda$-hypersurfaces in $\mathbb{R}^3$ in that the proof is interesting and using Gauss-Bonnet Formula, Minkowski Integral Formulas and the Willmore's inequality.
\begin{theorem}\label{3.72}
Let $\Sigma^2\subset \mathbb{R}^3$ be a closed  $\lambda$-hypersurface satisfying $H-\frac{\langle x, \mathbf{n}\rangle}{2}=\lambda$ with $\lambda\geq 0$. If $\Sigma$ satisfies $|A|^2 \leq \frac{1+\lambda^2}{2}$, then $\Sigma$ is a round sphere.

\end{theorem}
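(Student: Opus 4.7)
The plan is to sandwich $\int_\Sigma H^2\,dA$ between an upper bound from Gauss--Bonnet plus the pointwise hypothesis on $|A|^2$, and a lower bound from Willmore's inequality, and then use a first Minkowski identity together with the $\lambda$-equation to force equality throughout. The divergence theorem in $\mathbb{R}^3$ will enter to express $\int_\Sigma H\,dA$ in terms of the volume enclosed by $\Sigma$.

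Concretely, I would first integrate $\Delta(|x|^2/2) = 2 - H\langle x,\mathbf{n}\rangle$ over $\Sigma$ and substitute $\langle x,\mathbf{n}\rangle = 2(H-\lambda)$ to produce the key identity
\[
\int_\Sigma H^2 \, dA = V + \lambda \int_\Sigma H \, dA, \qquad V := |\Sigma|. \quad (\ast)
\]
Next, combining the Gauss--Bonnet bound $\int_\Sigma K\,dA \le 4\pi$ with the Gauss equation $2K = H^2 - |A|^2$ and the hypothesis $|A|^2 \le (1+\lambda^2)/2$ gives
\[
\int_\Sigma H^2\,dA \;\le\; 8\pi + \frac{1+\lambda^2}{2}\,V,
\]
and Willmore's inequality $\int_\Sigma H^2\,dA \ge 16\pi$ (with equality iff round sphere) immediately yields $(1+\lambda^2)V \ge 16\pi$. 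To finish, I compute $\int_\Sigma H\,dA$ from the $\lambda$-equation $H = \lambda + \langle x,\mathbf{n}\rangle/2$ and the divergence theorem $\int_\Sigma \langle x, \mathbf{n}\rangle\,dA = 3\,\mathrm{Vol}(\Omega)$, where $\Omega$ is the region bounded by $\Sigma$; this gives $\int_\Sigma H\,dA = \lambda V + \tfrac{3}{2}\mathrm{Vol}(\Omega)$. Plugging into $(\ast)$ and comparing with the Gauss--Bonnet upper bound yields
\[
(1+\lambda^2)\,V \;+\; 3\lambda\,\mathrm{Vol}(\Omega) \;\le\; 16\pi,
\]
and together with $(1+\lambda^2)V \ge 16\pi$ this forces the whole chain to collapse to equalities. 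Equality in Willmore then forces $\Sigma$ to be a round sphere, while $\lambda\,\mathrm{Vol}(\Omega) = 0$ together with $\mathrm{Vol}(\Omega) > 0$ forces $\lambda = 0$.

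The main obstacle --- and what makes all three ingredients genuinely necessary --- is that each bound is useless in isolation: Gauss--Bonnet alone only upper-bounds $\int H^2$, Willmore alone only lower-bounds it, and $(\ast)$ alone is just an identity. The matching happens precisely at the threshold $16\pi$, and the divergence-theorem contribution $\tfrac{3\lambda}{2}\mathrm{Vol}(\Omega)$ is exactly what kills the $\lambda > 0$ case. A by-product of the argument is that the theorem is in fact vacuous for $\lambda>0$: no closed $\lambda$-hypersurface in $\mathbb{R}^3$ with $\lambda>0$ can satisfy $|A|^2\le(1+\lambda^2)/2$, so the proof simultaneously establishes that emptiness and recovers the classical rigidity of the round sphere in the self-shrinker case $\lambda = 0$.
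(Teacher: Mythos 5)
Your proposal is correct and uses exactly the same three ingredients as the paper's proof --- Gauss--Bonnet, the Minkowski integral formulas combined with the $\lambda$-equation, and Willmore's inequality $\int_\Sigma H^2 \geq 16\pi$ with its rigidity case --- arriving at the same conclusion, including the by-product that $\lambda$ must vanish. The only cosmetic difference is bookkeeping: you keep $\mathrm{Vol}(\Omega)$ explicit and compare two bounds on $(1+\lambda^2)\,\mathrm{Area}(\Sigma)$, whereas the paper uses $\int_\Sigma \langle x,\mathbf{n}\rangle = 3\,\mathrm{Vol}(\Omega) \geq 0$ early to get $\int_\Sigma H \geq \tfrac{\lambda}{1+\lambda^2}\int_\Sigma H^2$ and then deduces $\int_\Sigma H^2 \leq 16\pi(1-g)$.
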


\begin{proof}
First, by the Gauss-Bonnet Theorem, we have the following identity:
\begin{equation}\label{3.73}
\int_\Sigma H^2=\int_\Sigma |A|^2 + 8 \pi(1-g),
\end{equation}
where $g$ is the genus of $\Sigma$.

Next, by Minkowski Integral Formulas and Stokes' theorem, we have
\begin{equation}\label{3.74}
\int_\Sigma H \langle x,\mathbf{n}\rangle=2\text{Area}(\Sigma),
\end{equation}
\begin{equation}\label{3.75}
\int_\Sigma \langle x,\mathbf{n}\rangle=3\text{Volume}(\Omega).
\end{equation}
Here $\Omega$ is the region enclosed by $\Sigma$.

By (\ref{3.74}), (\ref{3.75}) and the $\lambda$-hypersurface equation, we get that
\begin{equation}
\int_\Sigma H \geq \lambda \text{Area}(\Sigma)=\lambda \Big(\int_\Sigma H^2 -\lambda \int_\Sigma H\Big).
\end{equation}
This implies
\begin{equation}\label{3.76}
\int_\Sigma H\geq \frac{\lambda}{1+\lambda^2}\int_\Sigma H^2.
\end{equation}
Using (\ref{3.73}), (\ref{3.74}) and the $|A|$ bound, we see that
\begin{equation}
\int_\Sigma H^2 \leq \Big(\frac{1+\lambda^2}{2}\Big)\text{Area}(\Sigma)+8\pi (1-g) \leq \Big(\frac{1+\lambda^2}{2}\Big)\Big(\int_\Sigma H^2 -\lambda \int_\Sigma H\Big)+8\pi (1-g).
\end{equation}
Combining this with (\ref{3.76}) gives 
\begin{equation}\label{3.77}
\int_\Sigma H^2 \leq 16\pi (1-g).
\end{equation}
Recall that for any smooth closed surface $M$ in $\mathbb{R}^3$, the Willmore energy $\int_M H^2$ is greater than or equal to $16\pi$, with the equality holds if and only $M$ is a round sphere.
Therefore, by (\ref{3.77}) we conclude that $\Sigma$ is a round sphere. Actually this case implies that $\lambda=0$.

\end{proof}

\subsection{Closed $\lambda$-hypersurfaces in $\mathbb{R}^3$ with the second fundamental form of constant length}
Let $\Sigma^2\subset \mathbb{R}^3$ be a smooth complete embedded self-shrinker. If $\abs{A}$ is a constant, then one can show that $\Sigma$ is a generalized cylinder $\mathbb{S}^k\times \mathbb{R}^{2-k}$ for some $k \leq 2$; see \cite{DX1} and \cite{G1}. One way to prove this is to consider the point where the norm of position vector $|x|$ achieves its minimum.

For $\lambda$-hypersurfaces, we will use a similar idea and an important result from \cite{HW1} to show that any smooth closed $\lambda$-hypersurface in $\mathbb{R}^3$ with $\lambda\geq 0$ and $|A|=constant$ is a round sphere, i.e., Theorem \ref{1.4}.

First of all, we recall the following ingredients from \cite{HW1}.
\begin{definition}
A hypersurface in $\mathbb{R}^3$ is called a special Weingarten surface (special $W$-surface) if its Gauss curvature and mean curvature\footnote{In \cite{HW1}, they use the average rather than the sum of the principal curvatures.}, $K$ and $H$, are connected by an identity 
\begin{equation}
F(K,H)=0
\end{equation}
in which $F$ satisfies the following condition:
\begin{itemize}
\item The function $F(K,H)$ is defined and of class $C^2$ on the portion $4K\leq H^2$ of the $(K,H)-plane$ and satisfies
\begin{equation}
F_H+HF_K \neq 0 \,\,\, \,\,\,\,  {\text{when}} \,\,\, \,\, 4K=H^2.
\end{equation}
\end{itemize}
\end{definition} 

In \cite{HW1}, Hartman and Wintner proved the following theorem for special $W$-surfaces.

\begin{theorem}[\cite{HW1}]\label{3.80}
If a closed orientable surface $S$ of genus 0 is a special $W$-surface of class $C^2$, then $S$ is a round sphere.

\end{theorem}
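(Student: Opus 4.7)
The plan is to adapt Hopf's classical proof that closed genus-zero constant mean curvature surfaces are round spheres, replacing holomorphicity of the Hopf differential by a pseudoanalytic (Vekua-type) equation that survives under the weaker hypothesis $F(K,H)=0$.

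First I would pick local isothermal parameters $z = u + iv$ so that the first fundamental form is $ds^{2} = E\,|dz|^{2}$, and form Hopf's complex quadratic differential
\begin{equation*}
\Phi\,dz^{2} = (e - g - 2if)\,dz^{2},
\end{equation*}
where $e, f, g$ are the coefficients of the second fundamental form. A direct calculation gives $|\Phi|^{2} = E^{2}(H^{2} - 4K)$, so $\Phi$ vanishes precisely at the umbilic points of $S$. The Codazzi equations give the well-known identity $\Phi_{\bar z} = E\,H_{z}$, which is exactly Hopf's starting point in the constant-mean-curvature case.

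Next I would exploit the special $W$-surface relation. Differentiating $F(K,H)=0$ in $z$ yields $F_{H} H_{z} + F_{K} K_{z} = 0$; substituting the Gauss equation $4K = H^{2} - |\Phi|^{2}/E^{2}$ to express $K_{z}$ in terms of $H_{z}$, $\Phi$, and $\Phi_{z}$, and then solving for $H_{z}$, one produces an expression for $H_{z}$ as a linear combination of $\Phi$ and $\overline{\Phi}$ with smooth coefficients. The hypothesis $F_{H} + H F_{K} \neq 0$ on the umbilic locus $\{4K = H^{2}\}$ is precisely what keeps the resulting denominators bounded there. Plugging back into $\Phi_{\bar z} = E H_{z}$ then gives a Vekua equation of the form
\begin{equation*}
\Phi_{\bar z} \;=\; a(z,\bar z)\,\Phi + b(z,\bar z)\,\overline{\Phi}
\end{equation*}
on all of $S$, with bounded measurable coefficients $a, b$. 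I would then invoke Bers's similarity principle for pseudoanalytic functions: every solution of such a Vekua equation can be written locally as $\Phi(z) = e^{s(z)} h(z)$ with $s$ continuous and $h$ holomorphic. Consequently either $\Phi \equiv 0$, or the zeros of $\Phi$ are isolated and of finite positive order, and $\Phi\,dz^{2}$ defines a line field on $S$ whose local index at every zero is strictly negative (a zero of order $n$ contributes index $-n/2$ by the standard winding computation on $z^{n}\,dz^{2}$). Summing indices via the Poincar\'e--Hopf theorem for line fields gives $\sum \mathrm{ind} = \chi(S) = 2 > 0$, which is incompatible with all the local indices being negative. Hence $\Phi \equiv 0$ on $S$, every point is umbilic, and the classical rigidity of closed totally umbilic surfaces in $\mathbb{R}^{3}$ forces $S$ to be a round sphere.

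The main obstacle is the derivation of the Vekua equation with coefficients that remain bounded across the umbilic set: this is exactly where the non-degeneracy condition $F_{H} + H F_{K} \neq 0$ along $\{4K = H^{2}\}$ must be used, together with the $C^{2}$ regularity of $F$, in order to differentiate the constraint and to safely invert the relation near the critical locus of the map $p \mapsto (K(p), H(p))$; without it $H_{z}$ could blow up at umbilic points and the similarity principle would no longer apply globally.
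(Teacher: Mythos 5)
The paper does not actually prove this statement: Theorem \ref{3.80} is quoted from Hartman and Wintner \cite{HW1}, so there is no internal proof to compare against. Your proposal reconstructs the standard Hopf--Chern route (Hopf quadratic differential, pseudoanalytic equation, Bers similarity principle, index count on a genus-zero surface), which is how this result is usually proved in modern treatments, and the overall architecture is sound: $\Phi$ vanishes exactly at umbilics; the similarity principle forces $\Phi\equiv 0$ or isolated zeros of finite order $n$ with line-field index $-n/2<0$; Poincar\'e--Hopf for line fields gives total index $\chi(S)=2>0$ (and a nowhere-vanishing $\Phi$ is ruled out for the same reason); hence $\Phi\equiv 0$, $S$ is totally umbilic, and a closed totally umbilic surface in $\mathbb{R}^3$ is a round sphere.

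Two steps need more care. First, ``solving for $H_z$ gives a linear combination of $\Phi$ and $\overline{\Phi}$'' is not quite what the computation produces: since $K_z$ contains $\bigl(|\Phi|^2/E^2\bigr)_z = E^{-2}\bigl(\Phi_z\overline{\Phi}+\Phi\,\overline{\Phi_z}\bigr)+\cdots$ and $\overline{\Phi_z}=\overline{\Phi}_{\bar z}$ is not controlled by Codazzi, differentiating $F(K,H)=0$ and its conjugate yields a coupled $2\times 2$ linear system for $(H_z,H_{\bar z})$ whose determinant equals $(F_H+cHF_K)^2$ plus a term proportional to $F_K^2|\Phi|^2/E^2$; it is invertible only in a neighborhood of the umbilic locus, which is exactly where the nondegeneracy hypothesis enters (away from umbilics $\Phi\neq 0$, so this suffices). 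After inversion the right-hand side still carries factors of $\Phi_z$ and $\overline{\Phi_z}$, which must be absorbed into the coefficients $a,b$ of the Vekua equation; they are therefore only bounded where $\Phi_z$ is bounded. Second, and relatedly, for a surface of class $C^2$ --- the stated hypothesis --- $\Phi$ is merely continuous and $\Phi_z$ need not exist, so the Vekua/similarity-principle argument does not apply as written; handling this low-regularity case is precisely Hartman and Wintner's contribution and uses their separate asymptotic analysis of umbilics. Since the $\lambda$-hypersurfaces in this paper are smooth, your argument (with the system-solving step repaired) suffices for the application in the proof of Theorem \ref{1.4}, but it does not establish the theorem at the $C^2$ generality stated.
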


One may easily check that a closed surface with $\abs{A}=constant$ is a special $W$-surface, so by Theorem \ref{3.80}, we have the following corollary.

\begin{corollary}\label{3.81}
Let $\Sigma^2\subset \mathbb{R}^3$ be a smooth closed embedded surface of genus 0. If $|A|=constant$, then $\Sigma$ is a round sphere.
\end{corollary}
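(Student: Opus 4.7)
The plan is to deduce Corollary 3.81 directly from the Hartman--Wintner theorem (Theorem 3.80), so that the only real work is to exhibit a Weingarten relation of the type demanded by the preceding definition of a special $W$-surface.

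First, I would write the hypothesis in terms of the standard invariants $H$ and $K$ of $\Sigma$. In principal-curvature coordinates $\kappa_1,\kappa_2$ one has
\[
|A|^2 \;=\; \kappa_1^2+\kappa_2^2 \;=\; (\kappa_1+\kappa_2)^2 - 2\kappa_1\kappa_2 \;=\; H^2-2K,
\]
so the assumption $|A|\equiv c$ translates into the single algebraic Weingarten relation
\[
F(K,H) \;:=\; H^2 - 2K - c^2 \;=\; 0.
\]

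Second, I would verify that this $F$ meets the definition of a special $W$-surface. Clearly $F$ is a polynomial, hence of class $C^\infty$ on the whole $(K,H)$-plane and in particular on the physical region $4K\leq H^2$. It remains to confirm the non-degeneracy condition at umbilic points $4K=H^2$. If $c=0$ then $A\equiv 0$, which would force $\Sigma$ to be contained in a hyperplane, contradicting closedness; hence $c>0$. Substituting $4K=H^2$ into $F=0$ gives $H^2=2c^2$, so $H$ is bounded away from $0$ at every umbilic point, and the required non-degeneracy is then a direct check from the formulas $F_H=2H$, $F_K=-2$.

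Finally, by hypothesis $\Sigma^2\subset\mathbb R^3$ is smooth, closed, embedded and of genus $0$, and by the previous step it is a special $W$-surface of class $C^2$. Theorem 3.80 applies verbatim and forces $\Sigma$ to be a round sphere, concluding the proof. There is no real obstacle here; the content of the corollary lies entirely in recognizing the identity $|A|^2=H^2-2K$ and matching it against the hypothesis of Hartman and Wintner's rigidity theorem for genus-$0$ $W$-surfaces.
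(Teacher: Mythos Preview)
Your approach is precisely the one the paper intends: its entire argument for Corollary~3.81 is the remark preceding the statement that a surface with constant $|A|$ is ``easily'' seen to be a special $W$-surface, after which Theorem~3.80 applies. So on the level of strategy you are aligned with the paper.

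There is, however, a slip in the one place where you actually have to compute. With $F(K,H)=H^{2}-2K-c^{2}$ you correctly record $F_H=2H$ and $F_K=-2$, but then
\[
F_H+HF_K \;=\; 2H-2H \;=\; 0
\]
\emph{identically}, so the non-degeneracy condition, exactly as printed in the paper's Definition, is \emph{not} satisfied by this $F$; your ``direct check'' does not go through. The culprit is the normalisation flagged in the footnote: Hartman--Wintner take the mean curvature to be the \emph{average} $\bar H=(\kappa_1+\kappa_2)/2$, and their non-degeneracy condition is formulated in that convention. In those variables $|A|^{2}=4\bar H^{2}-2K$, so with $\tilde F(K,\bar H)=4\bar H^{2}-2K-c^{2}$ one gets
\[
\tilde F_{\bar H}+\bar H\,\tilde F_{K} \;=\; 8\bar H-2\bar H \;=\; 6\bar H,
\]
which is nonzero at every umbilic on the surface because, as you observed, $c>0$ forces $\bar H=\pm c/\sqrt{2}\neq 0$ there. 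Thus the special $W$-surface hypothesis is indeed met, but only once the convention is handled correctly; you should carry out the check explicitly rather than assert it, since with the sum convention used elsewhere in the paper the naive computation collapses to zero.
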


By Corollary \ref{3.81}, in order to prove Theorem \ref{1.4}, all we need to show is that any closed $\lambda$-hypersurface with constant $|A|$ has genus 0.

\begin{proof}[Proof of Theorem \ref{1.4}]
First, by Gauss-Bonnet Formula, Minkowski Integral Formulas and Stokes' theorem, we have
\begin{equation}\label{3.82}
\int_\Sigma H^2=\int_\Sigma |A|^2+8 \pi(1-g),
\end{equation}
\begin{equation}\label{3.83}
\int_\Sigma H \langle x,\mathbf{n}\rangle=2\text{Area}(\Sigma),
\end{equation}

\begin{equation}\label{3.84}
\int_\Sigma \langle x,\mathbf{n}\rangle=3\text{Volume}(\Omega),
\end{equation}
where $g$ is the genus of $\Sigma$ and $\Omega$ is the region enclosed by $\Sigma$.

Combining above identities, we deduce that
\begin{equation}\label{3.85}
\int_\Sigma H^2 \geq (\lambda^2+1)\int_\Sigma=(\lambda^2+1)\text{Area}(\Sigma).
\end{equation}
Next, we consider the point $p\in\Sigma$ where $|x|$ achieves its minimum.
By Lemma \ref{3.43}, at point $p$, we have
\begin{equation}\label{3.86}
H^2(p)\leq \frac{2+\lambda^2+\lambda \sqrt{\lambda^2+4}}{2}.
\end{equation}
At point $p$, we can choose a local orthonormal frame $\{e_1,e_2\}$ such that the second fundamental form $a_{ij}=\lambda_i \delta_{ij}$ for $i,j=1,2$. Thus, we have 
\begin{equation}\label{3.87}
|\nabla H|^2=(a_{111}+a_{221})^2+(a_{112}+a_{222})^2.
\end{equation}
Since  $|A|^2=constant$, we see that
\begin{equation}\label{3.88}
a_{11}a_{111}+a_{22}a_{221}=a_{11}a_{112}+a_{22}a_{222}=0.
\end{equation}
Note that at point $p$, $\abs{\nabla H}=0$. This implies 
\begin{equation*}
a_{111}+a_{221}=a_{112}+a_{222}=0.
\end{equation*}
Combining this with (\ref{3.87}) and (\ref{3.88}), we get
\begin{equation}
a_{111}(a_{11}-a_{22})=a_{222}(a_{11}-a_{22})=0.
\end{equation}

If $a_{11}=a_{22}$, then by (\ref{3.86}), we have
\begin{equation}
|A|^2=\frac{H^2}{2}\leq \frac{2+\lambda^2+\lambda \sqrt{\lambda^2+4}}{4}.
\end{equation}
By Theorem \ref{3.45}, this implies $\Sigma$ is a round sphere.

If $a_{111}=a_{222}=0$, then $\abs{\nabla A}^2=0$.  Hence, 
\begin{equation}
\Big(\frac{1}{2}-\abs{A}^2\Big)\abs{A}^2=\lambda\langle A^2,A\rangle.
\end{equation}
Thus, we have 
\begin{equation}
\Big(|A|^2-\frac{1}{2}\Big)|A|^2=-\lambda\langle A^2,A\rangle \leq \lambda |A|^3.
\end{equation}
Therefore,
\begin{equation}
|A|^2 \leq \frac{1+\lambda^2+\lambda\sqrt{\lambda^2+2}}{2}.
\end{equation}
Combining this with (\ref{3.82}) and (\ref{3.85}) gives
\begin{equation}
(\lambda^2+1)\text{Area}(\Sigma) \leq \int_\Sigma H^2 \leq \frac{1+\lambda^2+\lambda\sqrt{\lambda^2+2}}{2} \text{Area}(\Sigma)+8\pi (1-g).
\end{equation} 
Observe that 
\begin{equation}
\lambda^2+1> \frac{1+\lambda^2+\lambda\sqrt{\lambda^2+2}}{2},
\end{equation}
then the genus $g=0$.
By Corollary \ref{3.81}, we conclude that $\Sigma$ is a round sphere. This completes the proof.

\end{proof}

\begin{remark}
Note that our method does not apply to higher dimensions. It is desirable that one may remove the conditions of closeness and $\lambda\geq 0$ to prove that any $\lambda$-hypersurface $\Sigma^2\subset \mathbb{R}^3$ with $|A|=constant$ is a generalized cylinder. 
\end{remark}

\section{Embedded $\lambda$-hypersurfaces in $\mathbb{R}^2$}
In this section, we will follow the argument in \cite{MC1} to show that any $\lambda$-hypersurface ($\lambda$-curve) in $\mathbb{R}^2$  with $\lambda \geq 0$ must either be a line or a round circle, i.e., Theorem \ref{1.7}.

\begin{proof}[Proof of Theorem \ref{1.7}]
Suppose $s$ is an arclength parameter of $\gamma$, then the curvature is $H=-\langle \nabla_{\gamma'}\gamma', \mathbf{n}\rangle$. Note that $\nabla_{\gamma'}\mathbf{n}=H\gamma'$, so we have
\begin{equation}\label{4.100}
2H'=\nabla_{\gamma'}\langle x,\mathbf{n} \rangle=H\langle x,\gamma' \rangle.
\end{equation}

If at some point $H=0$, then $H'=0$. By the uniqueness theorem of ODE, we conclude that $H\equiv0$, and, thus, $\gamma$ is just a line.
Therefore, we may assume that $H$ is always nonzero and possibly reversing the orientation of the curve to make $H>0$, i.e., $\gamma$ is strictly convex.

Differentiating $\abs{x}^2$ gives
\begin{equation}
(|x|^2)'=2\langle x,\gamma' \rangle=4\frac{H'}{H}.
\end{equation}
Thus $H=Ce^{\frac{|x|^2}{4}}$ for some constant $C>0$. 

Since the curve is strictly convex, we introduce a new variable $\theta$ by $\theta=\arccos\langle \mathbf{E}_1, n \rangle$. 

Differentiating with respect to the arclength parameter gives 
\begin{equation}
\partial_s \theta=-H,
\end{equation}

\begin{equation}
H_\theta=-\frac{H'}{H}=-\frac{\langle x,\gamma'\rangle}{2},
\end{equation}
and
\begin{equation}\label{4.105}
H_{\theta\theta}=\frac{\partial_s H_\theta}{-H}=\frac{1-2H(H-\lambda)}{2H}=\frac{1}{2H}-H+\lambda.
\end{equation}
Multiplying both sides of the above equation by $2H_\theta$, we get 
\begin{equation}
\partial_\theta(H_\theta^2+H^2-\log H-2\lambda H)=0.
\end{equation}
Therefore, the quantity 
\begin{equation}
E=H_\theta^2+H^2-\log H-2\lambda H
\end{equation}
is a constant.

Consider the function $f(t)=t^2-\log t-2\lambda t$, $t>0$. It is easy to verify that $f(t)\geq f(\frac{\lambda+\sqrt{\lambda^2+2}}{2})$. Hence, $E\geq f(\frac{\lambda+\sqrt{\lambda^2+2}}{2})$. 

If $E=f(\frac{\lambda+\sqrt{\lambda^2+2}}{2})$, then $H$ is constant and $\gamma$ must be a round circle.

Now we assume that $E> f(\frac{\lambda+\sqrt{\lambda^2+2}}{2})$. Note that $H=Ce^{\frac{\abs{x}^2}{4}}$ and $H\leq \abs{x/2}+\abs{\lambda}$. Then $H$ has an upper bound and $\abs{x}$ is bounded. By the embeddedness and completeness of $\gamma$, we conclude that $\gamma$ must be closed, simple and strictly convex.

If $\gamma$ is not a round circle, then we consider the critical points of the curvature $H$. By our assumption that $E> f(\frac{\lambda+\sqrt{\lambda^2+2}}{2})$, when $H_\theta=0$, we have $H_{\theta\theta}=\frac{1}{2H}-H+\lambda\neq 0$. So the critical points are not degenerate. By the compactness of the curve, they are finite and isolated.

Without loss of generality, we may assume $H(0)=H_{max}$ and $H(\bar{\theta})$ is the first subsequent critical point of $H$ for $\bar{\theta}>0$. Combining the fact that the curvature is strictly decreasing in the interval [0, $\bar{\theta}$] with  the second-order ODE of the function $H$ is symmetric with respect to $\theta=0$ and $\theta=\bar{\theta}$,  we conclude that $H(\bar{\theta})$  must be the minimum of the curvature. 

By the four-vertex theorem, we know that $\gamma$ has at least four pieces like the one described above. Since our curve is closed and embedded, the curvature $H$ is periodic with period $T<\pi$ and $\frac{T}{2}=\bar{\theta}$.

Next, we will evaluate an integral to produce a contradiction.

Since $H_{\theta\theta}=\frac{1}{2H}-H+\lambda$,  we have 
\begin{equation}
(H^2)_{\theta\theta\theta}+4(H^2)_\theta=\frac{2H_\theta}{H}+6\lambda H_\theta.
\end{equation}
Now we consider the following integral
\begin{equation}
2 \int_0^\frac{T}{2}   \sin2\theta \frac{H_\theta}{H} d\theta =\int_0^\frac{T}{2}    \sin2\theta \Big[(H^2)_{\theta\theta\theta}+4(H^2)_\theta-6\lambda H_\theta\Big] d\theta.
\end{equation}
Integration by parts gives 
\begin{equation}
\begin{split}
2 \int_0^\frac{T}{2}   \sin2\theta \frac{H_\theta}{H} d\theta 
  &=\sin 2\theta (H^2)_{\theta\theta}\big|_0^\frac{T}{2}-2\int_0^\frac{T}{2} \cos 2\theta  (H^2)_{\theta\theta} d\theta + 4\int_0^\frac{T}{2}   \sin2\theta (H^2)_\theta d\theta
\\ &-6 \lambda  \int_0^\frac{T}{2}   \sin2\theta H_\theta d\theta  \\&=2\sin T\Big[H^2_\theta(\frac{T}{2})+H(\frac{T}{2})H_{\theta\theta}(\frac{T}{2})\Big]-2\cos2\theta(H^2)_\theta \big |_0^\frac{T}{2} \\ &-6 \lambda  \int_0^\frac{T}{2}   \sin2\theta H_\theta d\theta \\ &=2\sin T H(\frac{T}{2})H_{\theta\theta}(\frac{T}{2})-6 \lambda  \int_0^\frac{T}{2}   \sin2\theta H_\theta d\theta.
\end{split} 
\end{equation}
By (\ref{4.105}) and $H_\theta(0)=H_\theta (\frac{T}{2})=0$, we get 
\begin{equation}\label{4.110}
2 \int_0^\frac{T}{2}   \sin2\theta \frac{H_\theta}{H} d\theta =2\sin T\Big[\frac{1}{2}-H^2(\frac{T}{2})+\lambda H(\frac{T}{2})\Big]-6 \lambda  \int_0^\frac{T}{2}   \sin2\theta H_\theta d\theta.
\end{equation}
Since $H$ is decreasing from 0 to $\frac{T}{2}$ and $\sin2\theta$ is nonnegative, the left-hand side of (\ref{4.110}) is nonpositive. For the right-hand side, the first term is nonnegative since $H(\frac{T}{2})$ is a minimum, and $\lambda\geq 0$ implies the second term is nonpositive. So the right-hand side of (\ref{4.110}) is nonnegative, and this gives a contradiction. Therefore, we conclude that $\gamma$ is a round circle.

\end{proof}

\begin{remark}
For the noncompact case, we do not need the condition $\lambda\geq 0$ to prove it is a line, and we do need $\lambda \geq 0$ for the closed case.  When $\lambda <0$, there exist some embedded $\lambda$-curves which are not round circles.
\end{remark}

\section{A Bernstein type theorem for $\lambda$-hypersurfaces}
The aim of this section is to prove Theorem \ref{1.5} which generalizes Ecker and Huisken's result \cite{EH1}. The key ingredient is that for a $\lambda$-hypersurface $\Sigma$, the function $\langle v, \mathbf{n}\rangle $ is an eigenfunction of the operator $L$ with eigenvalue 1/2, where $v \in \mathbb{R}^{n+1}$ is any constant vector. Note that the result is also true for  self-shrinkers. This eigenvalue result was also obtained by McGonagle and Ross \cite{MR1}. 

\begin{lemma}\label{5.180}
If $\Sigma\subset \mathbb{R}^{n+1}$ is a $\lambda$-hypersurface, then for any constant vector $v\in\mathbb{R}^{n+1}$, we have 
\begin{equation*}
L\langle v, \mathbf{n} \rangle =\frac{1}{2}\langle v, \mathbf{n} \rangle.
\end{equation*}

\end{lemma}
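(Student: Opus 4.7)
My plan is to imitate the Colding--Minicozzi argument for self-shrinkers (the $\lambda=0$ case of the same identity), using the fact that $\lambda$ is a constant and hence contributes nothing to $\nabla H$. Differentiating the $\lambda$-hypersurface equation $H=\lambda+\tfrac{1}{2}\langle x,\mathbf n\rangle$ tangentially gives
\[
\nabla H=\tfrac{1}{2}\nabla\langle x,\mathbf n\rangle,
\]
exactly as in the self-shrinker case, so the cancellation that produces eigenvalue $1/2$ should go through verbatim.

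Concretely, I would fix a point $p\in\Sigma$ and a local orthonormal frame $\{e_i\}$ with $\nabla^\Sigma_{e_i}e_j(p)=0$, as in the proof of Lemma~\ref{2.10}, and carry out three short computations. \textbf{(i)} From the Weingarten relation $\nabla_{e_i}\mathbf n=-a_{ij}e_j$ together with the Codazzi identity, deduce the vector identity $\Delta\mathbf n=\nabla H-|A|^2\mathbf n$, viewing $\mathbf n$ as an $\mathbb R^{n+1}$-valued function on $\Sigma$. Contracting against the constant vector $v$ gives
\[
\Delta\langle v,\mathbf n\rangle=\langle v^T,\nabla H\rangle-|A|^2\langle v,\mathbf n\rangle.
\]
\textbf{(ii)} Use Weingarten once more to obtain $\nabla\langle v,\mathbf n\rangle=-A(v^T)$ and $\nabla\langle x,\mathbf n\rangle=-A(x^T)$; the $\lambda$-equation then yields $\nabla H=-\tfrac{1}{2}A(x^T)$, the only place $\lambda$ enters (and where it disappears because it is constant). \textbf{(iii)} Substitute (i) and (ii) into $\mathcal L\langle v,\mathbf n\rangle=\Delta\langle v,\mathbf n\rangle-\tfrac{1}{2}\langle x,\nabla\langle v,\mathbf n\rangle\rangle$; the two cross terms $\pm\tfrac{1}{2}\langle v^T,A(x^T)\rangle$ cancel by the symmetry of $A$, leaving $\mathcal L\langle v,\mathbf n\rangle=-|A|^2\langle v,\mathbf n\rangle$. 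Adding $(|A|^2+\tfrac{1}{2})\langle v,\mathbf n\rangle$ immediately produces the claimed $L\langle v,\mathbf n\rangle=\tfrac{1}{2}\langle v,\mathbf n\rangle$.

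There is no serious obstacle; the only thing that requires care is sign book-keeping. With the convention $a_{ij}=\langle\nabla_{e_i}e_j,\mathbf n\rangle$ used in the paper one has $H=-\mathrm{tr}(A)$, so the Codazzi-based trace computation gives $\sum_i a_{ij,i}=-H_{,j}$ rather than $+H_{,j}$, which is precisely the sign needed for $\Delta\mathbf n=\nabla H-|A|^2\mathbf n$ and hence for the cross-term cancellation in step~(iii). Once the signs are tracked consistently, $\lambda$ never appears in the final identity, in agreement with the observation in the paper that the same eigenvalue relation holds for self-shrinkers.
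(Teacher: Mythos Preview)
Your proposal is correct and is essentially the paper's own proof: both compute $\Delta\langle v,\mathbf n\rangle=\langle v,\nabla H\rangle-|A|^2\langle v,\mathbf n\rangle$ via Weingarten and Codazzi, then use the $\lambda$-equation to rewrite $\langle v,\nabla H\rangle=\tfrac12\langle x,\nabla\langle v,\mathbf n\rangle\rangle$ (which is exactly your symmetry-of-$A$ cancellation, written in index form as the paper's equation~(\ref{5.183})), and finally add $(|A|^2+\tfrac12)\langle v,\mathbf n\rangle$. Your packaging via the vector identity $\Delta\mathbf n=\nabla H-|A|^2\mathbf n$ and the operator notation $A(x^T)$, $A(v^T)$ is a cosmetic variation on the paper's coordinate computation; your sign remark about $H=-\mathrm{tr}(A)$ under the convention $a_{ij}=\langle\nabla_{e_i}e_j,\mathbf n\rangle$ is accurate and matches what the paper uses implicitly.
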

\begin{proof}
Set $f=\langle v, \mathbf{n} \rangle$. Working at a fixed point $p$ and choosing $e_i$ to be a local orthonormal frame, we have
\begin{equation*}
\nabla_{e_i}f=\langle v, \nabla_{e_i} \mathbf{n} \rangle =-a_{ij} \langle v, e_j \rangle.
\end{equation*}
Differentiating again and using Codazzi equation gives that
\begin{equation*}
\nabla_{e_k} \nabla_{e_i}=-a_{ijk}\langle v, e_j \rangle-a_{ij}a_{jk}\langle v, \mathbf{n} \rangle.
\end{equation*}
Therefore, 
\begin{equation}\label{5.182}
\Delta f=\langle v, \nabla H \rangle -\abs{A}^2 f.
\end{equation}
Using the equation of $\lambda$-hypersurfaces, we have
\begin{equation}\label{5.183}
\langle v, \nabla H \rangle=\langle v, -\frac{1}{2}a_{ij} \langle x, e_j \rangle e_i \rangle =\frac{1}{2}\langle x, \nabla f \rangle.
\end{equation}
Combining (\ref{5.182}) and (\ref{5.183}), we obtain that 
\begin{equation}
Lf=\Delta f-\frac{1}{2}\langle x, \nabla f \rangle+\Big(\frac{1}{2}+\abs{A}^2\Big)f=\frac{1}{2}f.
\end{equation}
\end{proof}

We are now in the position to prove Theorem \ref{1.5}.
\begin{proof}[Proof of Theorem \ref{1.5}]
Since $\Sigma$ is an entire graph, we can find a constant vector $v$ such that $f=\langle v,\mathbf{n} \rangle>0$. Let $u=1/f$. Then we have
\begin{equation}
\nabla u=-\frac{\nabla f}{f^2}\,\,\,\text{and}\,\,\, \Delta u=-\frac{\Delta f}{f^2}+\frac{2 |\nabla f|^2}{f^3}.
\end{equation}
By Lemma \ref{5.180}, we can easily get 
\begin{equation}
\mathcal{L}u=|A|^2 u+\frac{2|\nabla u|^2}{u}.
\end{equation}
Since $\Sigma$ has polynomial volume growth, we get
\begin{equation}
\int_\Sigma \Big(\abs{A}^2 u+\frac{2\abs{\nabla u}^2}{u}\Big)e^{-\frac{|x|^2}{4}}=0.
\end{equation}
Therefore, $|A|=0$ and $\Sigma$ is a hyperplane in $\mathbb{R}^{n+1}$.
\end{proof}

\section{Appendix}
Following the notation of \cite{CM3}, we call the quantity
\begin{equation}
H-\frac{\langle x,\mathbf{n}\rangle}{2}
\end{equation}  
rescaled mean curvature. Instead of considering $\lambda$-hypersurfaces, we consider more general hypersurfaces that have nonnegative rescaled mean curvature, i.e.,
\begin{equation}\label{300}
H-\frac{\langle x,\mathbf{n}\rangle}{2}\geq 0.
\end{equation}
Notice that $\lambda$-hypersurfaces ($\lambda\geq 0$) are just special cases of (\ref{300}). Closed hypersurfaces satisfying (\ref{300}) are closely related to a conjecture in \cite{CM3} and they behave nicely under the rescaled mean curvature flow.
Recall that a one-parameter family of hypersurfaces $M_t \subset \mathbb{R}^{n+1}$ flows by rescaled mean curvature if
\begin{equation}
\partial_{t}x=-\Big(H-\frac{\langle x,\mathbf{n}\rangle}{2}\Big)\mathbf{n}.
\end{equation}

Rescaled mean curvature flow is the negative gradient flow for the $F$-functional and self-shrinkers are the stationary points for this flow.

Under the rescaled MCF, closed hypersurfaces satisfying (\ref{300}) have several nice properties. If $M_0\subset \mathbb{R}^{n+1}$ is a smooth closed hypersurface satisfying (\ref{300}), then we have the following:
\begin{itemize}
\item Nonnegative rescaled mean curvature, i.e., (\ref{300}) is preserved under rescaled MCF, just like mean convexity is preserved under MCF.
\item As long as $(H-\frac{\langle x,\mathbf{n}\rangle}{2})>0$ holds at least at one point of $M_0$, the rescaled MCF $M_t$ will develop a singularity in finite time.
\item If $M_0$ satisfies an entropy condition, $\lambda(M_0)<3/2$, then at a singularity, there is a multiplicity one tangent flow of the form $\mathbb{S}^k \times \mathbb{R}^{n-k}$ for some $k>0$.
\end{itemize}

Basically, these three properties give a classification of singularities for rescaled MCF starting from a closed hypersurface satisfying (\ref{300}) and with low entropy.

Using corresponding Simons type identity and the parabolic maximum principle, the first two properties are not hard to prove. The last property is highly technical, and the proof involves analysis of the singular part of a weak solution of the self-shrinker equation (an integral rectifiable varifold) by using theories of stationary tangent cones.

Combining the above properties and a perturbation result, the following theorem was obtained in \cite{CM3}.
\begin{theorem}[{\cite{CM3}}]\label{302}
Given $n$, there exists $\epsilon=\epsilon(n)>0$ so that if ${\Sigma}^{n} \subset \mathbb{R}^{n+1}$ is a closed self-shrinker not equal to the round sphere, then $\lambda(\Sigma) \geq \lambda(\mathbb{S}^n)+\epsilon$. Moreover, if $\lambda(\Sigma)\leq$ min\{$\lambda(\mathbb{S}^n),\frac{3}{2}$\}, then $\Sigma$ is diffeomorphic to $\mathbb{S}^n$.
\end{theorem}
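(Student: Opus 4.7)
The plan is to combine the three properties of rescaled MCF listed in the appendix with a perturbation construction, proving first the moreover clause and then upgrading it to the uniform entropy gap.

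For the moreover clause, let $\Sigma \neq \mathbb{S}^n$ be a closed self-shrinker with $\lambda(\Sigma) \leq \min\{\lambda(\mathbb{S}^n), 3/2\}$. Consider the normal perturbation $\Sigma_\varepsilon = \{x + \varepsilon \mathbf{n}(x) : x\in\Sigma\}$; a direct variational computation shows
\begin{equation*}
\frac{d}{d\varepsilon}\bigg|_{\varepsilon=0}\Big(H_\varepsilon - \tfrac{1}{2}\langle x_\varepsilon, \mathbf{n}_\varepsilon\rangle\Big) = |A|^2 + \tfrac{1}{2} > 0,
\end{equation*}
so for small $\varepsilon>0$ the surface $\tilde{\Sigma} := \Sigma_\varepsilon$ satisfies (\ref{300}) with strict positivity everywhere and $\lambda(\tilde{\Sigma}) = \lambda(\Sigma) + O(\varepsilon)$. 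Running the rescaled MCF from $\tilde{\Sigma}$, property~(1) preserves (\ref{300}), property~(2) produces a finite-time singularity at some $t_\ast$, and (since $\lambda(\tilde{\Sigma}) < 3/2$ for small $\varepsilon$) property~(3) gives a multiplicity-one tangent flow $\mathbb{S}^k \times \mathbb{R}^{n-k}$ with $k > 0$. Huisken monotonicity bounds the tangent-flow entropy by $\lambda(\tilde{\Sigma}) \leq \lambda(\mathbb{S}^n) + O(\varepsilon)$; since $k \mapsto \lambda(\mathbb{S}^k)$ is strictly decreasing, $k = n$ is forced, producing a spherical singularity. Huisken's convergence theorem for mean convex flows (adapted to the rescaled setting) then gives that $\tilde{\Sigma}$, and hence $\Sigma$ by isotopy, is diffeomorphic to $\mathbb{S}^n$.

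For the entropy gap, argue by contradiction: suppose there exist closed self-shrinkers $\Sigma_i \neq \mathbb{S}^n$ with $\lambda(\Sigma_i) \to \lambda(\mathbb{S}^n)$. By the moreover clause each $\Sigma_i$ is a topological sphere. Invoking Colding-Minicozzi smooth compactness for closed self-shrinkers with bounded entropy (which produces uniform $|A|$ control via the Simons-type identity of Lemma~\ref{2.10}), pass to a subsequential smooth limit $\Sigma_\infty$, a closed self-shrinker with $\lambda(\Sigma_\infty) = \lambda(\mathbb{S}^n)$. Tracing the tangent-flow argument with equality throughout forces $\Sigma_\infty$ to be the round $\mathbb{S}^n$. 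To close the argument, use that $\mathbb{S}^n$ is isolated in the moduli of closed self-shrinkers: on $\mathbb{S}^n$ one computes $L = \Delta + 1$ acting on functions, with only two nonnegative eigenvalues, namely $1$ (constants, corresponding to dilations) and $1/2$ (linear restrictions, i.e.\ translations, by Lemma~\ref{5.180}), both associated with symmetries of the self-shrinker equation; the next eigenvalue is strictly negative. An implicit function argument on the self-shrinker equation, after quotienting out ambient translations and normalizing scale, then rules out nontrivial nearby self-shrinkers and gives $\Sigma_i = \mathbb{S}^n$ for large $i$, contradicting $\Sigma_i \neq \mathbb{S}^n$.

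The deepest ingredient invoked is property~(3), the tangent-flow classification under the low-entropy hypothesis, which hides the analysis of the singular set of weak self-shrinker solutions sketched in the appendix and is where most of the technical weight lies. Among the steps I execute, the most delicate is the isolation of $\mathbb{S}^n$ in the self-shrinker moduli: the spectral computation on $\mathbb{S}^n$, the quotient by symmetries, and the implicit function argument for a degenerate elliptic nonlinear equation all require care. A subsidiary technical point is the edge case $\lambda(\Sigma) = 3/2$, where strict inequality $\lambda(\tilde{\Sigma}) < 3/2$ is borderline; one handles it by composing the normal push with a small $F$-decreasing perturbation, available because every closed self-shrinker other than $\mathbb{S}^n$ is $F$-unstable.
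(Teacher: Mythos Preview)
The paper does not contain a proof of this theorem. Theorem~\ref{302} is quoted from \cite{CM3} in the appendix; the paper only records the surrounding ingredients---the three listed properties of rescaled MCF together with an unspecified ``perturbation result''---and then states the theorem as a black box. There is therefore no proof in the paper to compare against; your proposal is an attempted reconstruction of the argument of \cite{CM3} from those hints.

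Your overall strategy (perturb, run rescaled MCF, classify the tangent flow via the entropy bound, read off spherical topology) matches the sketch in the appendix and is indeed the shape of the argument in \cite{CM3}. Two points deserve attention. First, the variation formula has the wrong sign: under $x\mapsto x+\varepsilon\mathbf{n}$ one computes $\partial_\varepsilon\big(H-\tfrac12\langle x,\mathbf{n}\rangle\big)\big|_{\varepsilon=0}=-(|A|^2+\tfrac12)$, so the push must go inward; more to the point, in \cite{CM3} the perturbation is taken along an $F$-unstable direction (a fact you invoke only for the borderline $\lambda(\Sigma)=3/2$ case) precisely so that the entropy strictly drops below $\lambda(\Sigma)$ rather than merely staying $O(\varepsilon)$-close.

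Second, and this is a genuine gap, your derivation of the uniform $\varepsilon(n)$ via ``Colding--Minicozzi smooth compactness for closed self-shrinkers with bounded entropy'' is not supported. No such compactness theorem is available in general dimension from an entropy bound alone, and Lemma~\ref{2.10} is simply the Simons identity---it does not yield a uniform $|A|$ bound from bounded entropy. The gap in \cite{CM3} is not obtained by a compactness-plus-isolation argument of the type you outline.
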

The proof of this theorem implies that the round sphere minimizes entropy among not only all closed self-shrinkers, but also all closed hypersurfaces satisfying (\ref{300}). 

By Huisken's monotonicity formula, the entropy is monotone non-increasing under MCF and therefore the entropy of the initial hypersurface gives a bound of the entropy at all future singularities.  So the study of entropy will help us better understand the singularities of MCF.  A related conjecture is the following:
\begin{conjecture}[{\cite{CM3}}]\label{303}
Theorem \ref{302} holds with $\epsilon=0$ for any closed hypersurface $M^n$ with $n\leq 6$.
\end{conjecture}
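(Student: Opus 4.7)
The plan is to extend the framework that yields Theorem \ref{302} by running mean curvature flow from any putative counterexample and combining Huisken's monotonicity with a sharper classification of low-entropy tangent flows valid in dimensions $n\leq 6$.

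Suppose for contradiction that there is a smooth closed hypersurface $M^n$ with $n\leq 6$ and $\lambda(M) < \lambda(\mathbb{S}^n)$. Since $\lambda(\mathbb{S}^n) < 3/2$ for $n \geq 2$ (the case $n=1$ is already covered by the classification of self-shrinking curves from \cite{AbL}), one has $\lambda(M) < 3/2$. Now run the standard mean curvature flow $M_t$ with $M_0 = M$. By Huisken's monotonicity formula the entropy is non-increasing along the flow, so $\lambda(M_t) \leq \lambda(M) < \lambda(\mathbb{S}^n) < 3/2$ for all $t$. Since $M_0$ is closed, the flow must develop a singularity in finite time.

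At the first singular time, rescale to extract a tangent flow, which is a multiplicity-one complete self-shrinker $\Sigma$ with $\lambda(\Sigma) \leq \lambda(M) < \lambda(\mathbb{S}^n)$. By the low-entropy classification of tangent flows (the third property listed in the Appendix, now applied to an arbitrary closed initial datum rather than one satisfying \eqref{300}), $\Sigma$ must be a generalized cylinder $\mathbb{S}^k\times\mathbb{R}^{n-k}$; the entropy constraint rules out $k=n$, so the singularity is cylindrical with a nontrivial Euclidean factor. One then performs mean curvature flow with surgery in the style of Huisken-Sinestrari or Haslhofer-Kleiner: each surgery replaces a nearly cylindrical neck by a pair of convex caps, and by a careful quantitative construction can be arranged so that the entropy is not increased past $\lambda(\mathbb{S}^n)$. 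Iterating, one eventually reduces $M$ to a disjoint union of convex closed hypersurfaces, each of which flows to a round point whose tangent flow is a sphere of entropy exactly $\lambda(\mathbb{S}^n)$. Since entropy is non-increasing, this forces $\lambda(M) \geq \lambda(\mathbb{S}^n)$, a contradiction. The diffeomorphism-type statement then follows by tracking how the topology changes under each neck surgery and observing that in the low-entropy regime the surgeries preserve the homotopy-sphere type.

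The main obstacle is upgrading the low-entropy tangent flow classification from the smooth setting to general multiplicity-one tangent flows that may a priori possess a singular part; one must rule out non-cylindrical singular self-shrinkers of entropy strictly below $\lambda(\mathbb{S}^n)$. This is precisely where the dimension restriction $n\leq 6$ enters, through the use of Schoen-Simon style regularity for stable stationary integral varifolds, which fails in higher dimensions. A secondary delicate point is engineering the surgery procedure to give precise (rather than merely qualitative) entropy control, so that the chain of monotonicity inequalities along the flow genuinely closes up into the required contradiction.
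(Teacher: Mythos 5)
First, a point of comparison: the paper does not prove this statement at all --- it is recorded as a conjecture from \cite{CM3}, and the paper merely notes that it was subsequently proved by Bernstein and Wang \cite{BW1}. So there is no in-paper proof to match your outline against; what follows is an assessment of the outline on its own terms.

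There are two genuine gaps. First, the tangent-flow classification you invoke (the third bullet of the Appendix) is stated for initial hypersurfaces satisfying the rescaled mean convexity condition (\ref{300}) evolving by the rescaled flow. For an arbitrary closed $M$ with $\lambda(M)<3/2$, low entropy does give multiplicity one, but it does not tell you that the tangent flow is a generalized cylinder; classifying (or otherwise handling) the possible non-cylindrical low-entropy shrinkers arising from general initial data is precisely the open content of the conjecture, so this step is circular. Second, mean curvature flow with surgery is only available for two-convex hypersurfaces (Huisken--Sinestrari, Brendle--Huisken, Haslhofer--Kleiner); there is no surgery theory for general closed hypersurfaces of low entropy, and a cylindrical tangent flow at one singular point does not provide the global canonical-neighborhood and neck structure that surgery requires. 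The claim that the surgeries ``can be arranged so that the entropy is not increased past $\lambda(\mathbb{S}^n)$'' is an assertion, not an argument, and it is exactly the kind of quantitative control that is hard to extract from surgery constructions. For the record, the actual proof in \cite{BW1} avoids surgery entirely: one runs the weak (level-set) flow from a putative counterexample, shows that if $\lambda(M)<\lambda(\mathbb{S}^n)$ then at the extinction time some tangent flow must be a \emph{closed} self-shrinker, and then applies the Colding--Ilmanen--Minicozzi--White theorem that the round sphere minimizes entropy among closed self-shrinkers. The restriction $n\leq 6$ enters through the regularity theory for area-minimizing hypersurfaces (used in the self-expander argument that rules out non-compact tangent flows at extinction), rather than through Schoen--Simon regularity of the tangent flows themselves.
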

This conjecture was recently proved by Bernstein and Wang \cite{BW1}.

\bibliographystyle{alpha}
\bibliography{2015}

\begin{thebibliography}{CIMW13}

\bibitem[AL86]{AbL}
U.~Abresch and J.~Langer.
\newblock The normalized curve shortening flow and homothetic solutions.
\newblock {\em JDG}, 23(2):175--196, 1986.

\bibitem[Ang92]{A}
S.B. Angenent.
\newblock Shrinking doughnuts.
\newblock In {\em Nonlinear diffusion equations and their equilibrium states, 3
  ({G}regynog, 1989)}, volume~7 of {\em Progr. Nonlinear Differential Equations
  Appl.}, pages 21--38. Birkh\"auser Boston, Boston, MA, 1992.

\bibitem[BW14]{BW1}
J.~Bernstein and L.~Wang.
\newblock A sharp lower bound for the entropy of closed hypersurfaces up to
  dimension six.
\newblock {\em arXiv:1406.2966}, 2014.

\bibitem[CdCK70]{CDK}
S.S. Chern, M.~do~Carmo, and S.~Kobayashi.
\newblock Minimal submanifolds of a sphere with second fundamental form of
  constant length.
\newblock In {\em Functional {A}nalysis and {R}elated {F}ields ({P}roc. {C}onf.
  for {M}. {S}tone, {U}niv. {C}hicago, {C}hicago, {I}ll., 1968)}, pages 59--75.
  Springer, New York, 1970.

\bibitem[Cha14]{CH1}
J.E. Chang.
\newblock One dimensional solutions of the $\lambda $-self shrinkers.
\newblock {\em arXiv:1410.1782}, 2014.

\bibitem[CIMW13]{CM3}
T.H. Colding, T.~Ilmanen, W.P. Minicozz{i~II}, and B.~White.
\newblock The round sphere minimizes entropy among closed self-shrinkers.
\newblock {\em JDG}, 95(1):53--69, 2013.

\bibitem[CL13]{CL1}
H.~Cao and H.~Li.
\newblock A gap theorem for self-shrinkers of the mean curvature flow in
  arbitrary codimension.
\newblock {\em Calc. Var. Partial Differential Equations}, 46(3-4):879--889,
  2013.

\bibitem[CM12]{CM1}
T.H. Colding and W.P. Minicozz{i~II}.
\newblock Generic mean curvature flow {I}: generic singularities.
\newblock {\em Ann. of Math.}, 175(2):755--833, 2012.

\bibitem[CM15]{CM16}
T.H. Colding and W.P. Minicozz{i~II}.
\newblock Uniqueness of blowups and {L}ojasiewicz inequalities.
\newblock {\em Ann. of Math. (2)}, 182(1):221--285, 2015.

\bibitem[COW14]{COW1}
Q.M. Cheng, S.~Ogata, and G.~Wei.
\newblock Rigidity theorems of $\lambda$-hypersurfaces.
\newblock {\em Comm. Anal. Geom., to appear, arXiv:1403.4123}, 2014.

\bibitem[CW14a]{CW1}
Q.M. Cheng and G.~Wei.
\newblock Complete $\lambda$-hypersurfaces of weighted volume-preserving mean
  curvature flow.
\newblock {\em arXiv:1403.3177}, 2014.

\bibitem[CW14b]{CW2}
Q.M. Cheng and G.~Wei.
\newblock The gauss image of $\lambda$-hypersurfaces and a {B}ernstein type
  problem.
\newblock {\em arXiv:1410.5302}, 2014.

\bibitem[DX14]{DX1}
Q.~Ding and Y.L. Xin.
\newblock The rigidity theorems of self-shrinkers.
\newblock {\em Trans. Amer. Math. Soc.}, 366(10):5067--5085, 2014.

\bibitem[EH89]{EH1}
K.~Ecker and G.~Huisken.
\newblock Mean curvature evolution of entire graphs.
\newblock {\em Ann. of Math. (2)}, 130(3):453--471, 1989.

\bibitem[Gua14]{G1}
Q.~Guang.
\newblock Self-shrinkers with second fundamental form of constant length.
\newblock {\em arXiv:1405.4230}, 2014.

\bibitem[HW54]{HW1}
P.~Hartman and A.~Wintner.
\newblock Umbilical points and {$W$}-surfaces.
\newblock {\em Amer. J. Math.}, 76:502--508, 1954.

\bibitem[KM14]{KM}
S.~Kleene and N.M. M{\o}ller.
\newblock Self-shrinkers with a rotational symmetry.
\newblock {\em Trans. Amer. Math. Soc.}, 366(8):3943--3963, 2014.

\bibitem[Law69]{LS69}
H.B. Lawson.
\newblock Local rigidity theorems for minimal hypersurfaces.
\newblock {\em Ann. of Math. (2)}, 89:187--197, 1969.

\bibitem[Man11]{MC1}
C.~Mantegazza.
\newblock {\em Lecture notes on mean curvature flow}, volume 290 of {\em
  Progress in Mathematics}.
\newblock Birkh\"auser/Springer Basel AG, Basel, 2011.

\bibitem[M{\o}l11]{M1}
N.M. M{\o}ller.
\newblock Closed self-shrinking surfaces in $\mathbf{R}^3$ via the torus.
\newblock {\em arXiv:1111.7318}, 2011.

\bibitem[MR15]{MR1}
M.~McGonagle and J.~Ross.
\newblock The hyperplane is the only stable, smooth solution to the
  isoperimetric problem in gaussian space.
\newblock {\em Geom. Dedicata}, 178(1):277--296, 2015.

\bibitem[Ngu09]{N1}
X.H. Nguyen.
\newblock Construction of complete embedded self-similar surfaces under mean
  curvature flow. {I}.
\newblock {\em Trans. Amer. Math. Soc.}, 361(4):1683--1701, 2009.

\bibitem[Wan11]{WL}
L.~Wang.
\newblock A {B}ernstein type theorem for self-similar shrinkers.
\newblock {\em Geom. Dedicata}, 151:297--303, 2011.

\end{thebibliography}

\end{document}